\documentclass[11pt]{amsart}
\usepackage[english]{babel}
\usepackage[utf8x]{inputenc}
\usepackage[T1]{fontenc}
\usepackage{a4wide}
\usepackage{mathpazo}
\usepackage[dvipsnames]{xcolor}
\usepackage[colorlinks=true,allcolors=MidnightBlue,linktoc=page]{hyperref}

\usepackage{mathrsfs}
\usepackage{amssymb, amsmath}
\usepackage{enumitem}
\usepackage{graphicx}
\usepackage{pgf,tikz}
\usepackage{mathrsfs}
\usetikzlibrary{arrows} 
\usepackage{xspace}
\usepackage[all]{xy}
\usepackage{bm}
\usepackage{scalerel,stackengine}
\stackMath
\newcommand\reallywidehat[1]{%
\savestack{\tmpbox}{\stretchto{%
  \scaleto{%
    \scalerel*[\widthof{\ensuremath{#1}}]{\kern-.6pt\bigwedge\kern-.6pt}%
    {\rule[-\textheight/2]{1ex}{\textheight}}
  }{\textheight}%
}{0.5ex}}%
\stackon[1pt]{#1}{\tmpbox}%
}
\newcommand{\calX}{\mathcal X}
\newcommand{\ov}{\overline}

\newcommand{\C}{\mathbf C}

\newcommand{\CC}{\mathbf{C}}
\newcommand{\NN}{\mathbf{N}}

\newcommand{\ZZ}{\mathbf{Z}}
\newcommand{\RR}{\mathbf{R}}
\newcommand{\R}{\mathbf{R}}

\newcommand{\KK}{\mathbf{K}}

\renewcommand{\H}{\mathbf{H}}
\newcommand{\K}{\mathbf{K}}

\newcommand{\cat}{{\upshape CAT($0$)}\xspace}

\DeclareMathOperator{\Isom}{Isom}
\DeclareMathOperator{\Aut}{Aut}

\DeclareMathOperator{\Span}{Span}

\DeclareMathOperator{\Conv}{Conv}

\DeclareMathOperator{\Id}{Id}

\DeclareMathOperator{\OO}{O}
\DeclareMathOperator{\PO}{PO}
\DeclareMathOperator{\PP}{P}
\DeclareMathOperator{\SO}{SO}
\DeclareMathOperator{\PSO}{PSO}
\DeclareMathOperator{\GL}{GL}
\DeclareMathOperator{\PGL}{PGL}
\DeclareMathOperator{\SL}{SL}
\newcommand{\PGO}{\mathrm{P}\Gamma\mathrm{O}}
\newcommand{\PGaL}{\mathrm{P}\Gamma\mathrm{L}}
\DeclareMathOperator{\Ofr}{O^{\mathfrak{fr}}_\KK}

\DeclareMathOperator{\PSOfr}{PSO^{\mathfrak{fr}}_\KK}
\newcommand{\X}{\mathcal{X}}
\newcommand{\Y}{\mathcal{Y}}
\theoremstyle{plain}
\newtheorem{thm}{Theorem}[section]
\newtheorem*{thm*}{Theorem}
\newtheorem{lem}[thm]{Lemma}
\newtheorem{prop}[thm]{Proposition}
\newtheorem{cor}[thm]{Corollary}
\theoremstyle{definition}
\newtheorem*{defn*}{Definition}
\newtheorem{defn}[thm]{Definition}

\newtheorem*{example*}{Example}
\newtheorem{rem}[thm]{Remark}

\newtheorem*{rem*}{Remark}
\begin{document}
\title[Representations of infinite dimensional orthogonal groups with finite index]
{Representations of infinite dimension orthogonal groups of quadratic forms with finite index}
\author[B. Duchesne]{Bruno Duchesne}
\address{Institut \'Elie Cartan, UMR 7502, Universit\'e de Lorraine et CNRS, Nancy, France.}
\date{July 2019}
\begin{abstract}We study representations $G\to H$ where $G$ is either a simple Lie group with real rank at least 2 or an infinite dimensional orthogonal group of some quadratic form of finite index at least 2 and $H$ is such an orthogonal group as well. The real, complex and quaternionic cases are considered. Contrarily to the rank one case, we show that there is no exotic such representations and we classify these representations. 

On the way, we make a detour and prove that the projective orthogonal groups $\PO_\K(p,\infty)$ or their orthochronous component  (where $\K$ denotes the real, complex or quaternionic numbers) are Polish groups that are topologically simple but not abstractly simple.
\end{abstract}
\thanks{This works has been stimulated by discussions with Nicolas Monod and Pierre Py. I thank them for these stimulations and their comments about this paper. The author is partially supported  by projects ANR-14-CE25-0004 GAMME and ANR-16-CE40-0022-01 AGIRA. A part of this work was done during the semester "Geometric and Analytic Group Theory" at the Institute of Mathematics of the Polish Academy of Sciences and thus was partially supported by the grant 346300 for IMPAN from the Simons Foundation and the matching 2015-2019 Polish MNiSW fund.}
\maketitle
\section{Introduction}
The study of finite dimensional representations of Lie groups is a classical subject. Apart from finite dimensional representations, there are also infinite dimensional unitary representations, which are classically studied. In this paper, we are interested in some other infinite dimensional representations with a geometric taste. Namely, representations that preserve a quadratic or Hermitian form with finite index.

The simplest example is given by representations into the real orthogonal group $\OO(1,\infty)$ which is the group that preserves a quadratic form of signature $(1,\infty)$ on some separable real Hilbert space. In this case, the geometric taste is given by the induced action on the infinite dimensional hyperbolic space. Such representations were put in this geometric context in \cite{MR2881312,MR3263898}. The Cremona group has also a natural representation in $\OO(1,\infty)$ see for example \cite{MR2811600}.

In \cite{MR3263898}, representations $\PO(1,n)\to\PO(1,\infty)$ are studied and classified by a parameter $t\in(0,1]$. These representations correspond to a standard embedding if and only if $t=1$. In the other cases, these representations are called \emph{exotic}.  They come from the spherical principal series of $\PO(1,n)$. This spherical principal series also yields representations $\PO(1,n)\to\PO(p,\infty)$ for infinitely many $p>1$, where the possible values of $p$ depend on $n$. 

This result has been extended to the classification of self-representations of $\PO(1,\infty)\to\PO(1,\infty)$ in \cite{AHL_2019__2__259_0} and there is still a one parameter family of exotic self-representations.\\

Here we are interested in the higher rank cases (for the source group) and in particular in the existence of possible higher rank exotic representations. For uniform lattices in higher rank semisimple Lie groups, a theorem similar to the geometric interpretation of Margulis superrigidity has been proved in \cite[Theorem 1.2]{MR3343349}. This gives hints that there should be no exotic such representations for higher rank semisimple Lie groups.

Representations in $\PO(p,\infty)$, for $p\in\NN$, are not the only ones to give actions on infinite dimensional and finite rank symmetric spaces with non-positive curvature. One could also consider the similar constructions over the complex numbers and the quaternions. This gives rise to representations in $\PO_\K(p,\infty)$ where $\K=\R,\C$ or $\H$ and the associated symmetric spaces are denoted $\calX_\K(p,\infty)$ \cite{MR3044451,BD15} (see Sections 2 and 3 for precise definitions). We use the notations $\PO_\K(p,\infty)$  to have a uniform notation independent of the ground field (or division algebra) and $\PO(p,\infty)$ merely means $\PO_\R(p,\infty)$. Self-representations of $\PO_\C(1,\infty)$ have been also classified in \cite{monod2018notes} under an additional hypothesis.

The study of representations of these infinite dimensional classical groups is not completely new and unitary representations have been studied for example in \cite{ol1978unitary} and references therein. Even earlier, these infinite dimensional groups were studied by Pontryagin, Naimark and Ismagilov, see for example  \cite{MR0195991, MR0201569,Sasvari1990}.\\

Our first result is about finite dimensional Lie groups of higher rank. It shows that there is no exotic continuous representations in this case. All those representations come from finite dimensional representations and unitary representations.

 \begin{thm}\label{splitting}Let $G$ be a connected simple non-compact Lie group with trivial center. Let $G\to\PO_\K(p,\infty)$ be a continuous representation without totally isotropic invariant subspace.\\
 If the real rank of $G$ is at least 2 then the underlying Hilbert space $\mathcal{H}$ splits orthogonally as $E_1\oplus\dots\oplus E_k\oplus \mathcal{K}$ where each $E_i$ is a finite dimensional, non-degenerate, $G$-invariant linear subspace and the induced representation on $E_i$ is irreducible. The induced representation on $\mathcal{K}$ is unitary.
 \end{thm}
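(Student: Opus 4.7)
The plan is to recast the problem geometrically as a continuous isometric $G$-action on the associated symmetric space $\calX_{\K}(p,\infty)$, a complete CAT(0) space of finite rank $p$, and then to apply a higher-rank rigidity theorem to peel off $G$-invariant, finite-dimensional, non-degenerate subspaces until only a fixed point of the residual action remains. Recall that points of $\calX_{\K}(p,\infty)$ correspond to maximal negative-definite $p$-dimensional subspaces of $\mathcal{H}$, while points of the visual boundary $\partial\calX_{\K}(p,\infty)$ correspond to flags of totally isotropic subspaces. Hence the hypothesis that $\pi$ admits no totally isotropic invariant subspace translates into: \emph{$G$ has no fixed point on $\partial\calX_{\K}(p,\infty)$}.

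I would then invoke higher-rank rigidity for continuous isometric actions on complete finite-rank CAT(0) spaces, in the spirit of \cite{MR3343349} and of the author's earlier work, adapted from the uniform-lattice setting to the ambient simple Lie group. Under the no-boundary-fixed-point hypothesis, such a theorem should yield a dichotomy: either (a) $G$ has a fixed point in $\calX_{\K}(p,\infty)$, or (b) $G$ preserves a proper, totally geodesic, finite-dimensional sub-symmetric space. Using the description of totally geodesic subspaces of $\calX_{\K}(p,\infty)$ recalled in the preliminaries, alternative (b) corresponds to a finite-dimensional, non-degenerate, $G$-invariant subspace $F\subset\mathcal{H}$. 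Its orthogonal complement $F^{\perp}$ is then non-degenerate and $G$-invariant, inherits the absence of invariant totally isotropic subspaces, and carries an indefinite form of strictly smaller index. Since $p$ is finite, iterating this splitting terminates and reduces to alternative (a).

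In alternative (a), the fixed point supplies a $G$-invariant orthogonal decomposition of the remaining summand $\mathcal{H}_{\mathrm{rest}}=E\oplus \mathcal{K}$, with $E$ finite-dimensional negative-definite and $\mathcal{K}$ positive-definite. The $G$-action on $\mathcal{K}$ then preserves a positive-definite inner product on a Hilbert space and is therefore a continuous unitary representation. The $G$-action on the finite-dimensional $E$ decomposes, by complete reducibility of finite-dimensional representations of the semisimple group $G$, into an orthogonal direct sum of irreducible $G$-subrepresentations; each such subspace is automatically non-degenerate because $E$ is definite. Collecting the irreducible finite-dimensional pieces produced along the iterations of the splitting together with those of this final decomposition yields the desired $E_1,\dots,E_k$, and $\mathcal{K}$ is the unitary summand.

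The main obstacle is the higher-rank rigidity step itself: the superrigidity theorem cited above is originally formulated for uniform lattices, and the key technical effort is to transfer it to a continuous isometric action of the ambient simple Lie group $G$. Natural routes are either to restrict to a uniform lattice $\Gamma<G$, apply the lattice version and then extend the resulting invariant structure $G$-equivariantly by continuity, or to argue directly at the CAT(0) level via an equivariant harmonic map or barycentre construction exploiting the finite rank of $\calX_{\K}(p,\infty)$. A secondary but non-trivial point is the algebraic identification in alternative (b) between totally geodesic finite-dimensional sub-symmetric spaces of $\calX_{\K}(p,\infty)$ and non-degenerate finite-dimensional $G$-invariant subspaces of $\mathcal{H}$, which rests on the classification of such totally geodesic subspaces.
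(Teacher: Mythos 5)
Your proposal follows essentially the same route as the paper: translate the absence of invariant totally isotropic subspaces into the absence of fixed points at infinity of $\calX_\K(p,\infty)$, extend the uniform-lattice superrigidity of \cite{MR3343349} to the ambient group $G$ via a cocompact lattice and an equivariant harmonic map (the paper's Proposition~\ref{toteq}), extract from the resulting finite-dimensional invariant totally geodesic subspace a finite-dimensional non-degenerate $G$-invariant subspace of $\mathcal{H}$ whose invariant subspaces are all non-degenerate, and finish by induction on the index together with the observation that a fixed point in $\calX_\K(p,\infty)$ yields the unitary summand. The only cosmetic difference is that you phrase the rigidity step as a dichotomy, whereas the paper's Proposition~\ref{toteq} directly produces a $G$-equivariant totally geodesic copy of $\mathcal{X}_G$; the use made of it is identical.
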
 

Let us observe that the existence of a decomposition as a sum of finitely many irreducible representations and  a unitary one is known  for any group as soon as there is no totally isotropic invariant space \cite{MR0201569,Sasvari1990}. Moreover, this theorem extends the results of \cite{MR0195991} where it is proved for $\SL_2(\C)$. The strategy to prove Theorem~\ref{splitting} is to use the aforementioned mention \cite[Theorem 1.2]{MR3343349} for lattices and extended it to the whole ambiant group. This strategy may seem surprising since the ambiant Lie group has much more structure than its lattices. In particular, it has a differentiable structure. The topology used on $\PO_\K(p,\infty)$ is the coarsest that makes the action on the symmetric space $\calX_\K(p,\infty)$ continuous. This is not the topology coming from the norm topology on $\PO_\K(p,\infty)$ and thus one cannot use the standard result in finite dimension that a continuous homomorphism between Lie groups is actually smooth. In fact, the exotic representations $\PO(1,n)\to\PO(1,\infty)$ cited above are not continuous for the norm topology.

One can also imagine that one can directly adapt the proof for lattices to Lie groups but the proof for lattices uses harmonic maps from  a locally symmetric space. For the whole Lie groups this would lead to harmonic maps from a point. Over a point, differential methods could not work. 

Theorem~\ref{splitting} leaves open the study of representations of rank one Lie groups in the groups $\PO_\K(p,\infty)$ (See \cite[Problem 5.2]{MR3263898}). Let us mention that continuous representations of $\PSO(1,n)$ for $n$ finite into $\OO(2,\infty)$ have been considered in \cite{py2018hyperbolic} and the authors show there is no such irreducible representations. 

\begin{thm}\label{quad} Let $p,q\in \NN$ with $p\geq2$ and $\K,\mathbf{L}\in\{\R,\C,\H\}$, If $\rho\colon \PO_\K(p,\infty)\to\PO_{\mathbf{L}}(q,\infty)$ is a continuous geometrically dense representation then $\K=\mathbf{L}$, $q=p$ and $\rho$ is induced by an isomorphism of quadratic spaces.  \end{thm}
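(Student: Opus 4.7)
The plan is to reduce to the finite-dimensional higher-rank situation controlled by Theorem~\ref{splitting}, applied to the natural nested family of subgroups $G_n := \PO_\K(p,n) \hookrightarrow \PO_\K(p,\infty)$ (realized as the pointwise stabilizer of a negative-definite orthogonal complement inside $\K^{p,\infty}$), and then take a limit as $n\to\infty$. For each $n\geq 2$, the group $G_n$ (or its identity component) is a connected simple non-compact Lie group of real rank $\min(p,n)\geq 2$, so Theorem~\ref{splitting} applies to the restriction $\rho|_{G_n}$ once one has verified that there is no totally isotropic $G_n$-invariant subspace in $\mathcal{H} = \mathbf{L}^{q,\infty}$. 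The first step is to derive this condition from the geometric density of $\rho$: any $G_n$-invariant totally isotropic subspace of $\mathcal{H}$, pushed around by the normalizer of $G_n$ in $\PO_\K(p,\infty)$ (huge, since the $G_n$'s are mutually conjugate and their union generates $\PO_\K(p,\infty)$), would generate a $\PO_\K(p,\infty)$-invariant totally isotropic subspace, contradicting geometric density.

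Applying Theorem~\ref{splitting} to each $\rho|_{G_n}$ yields an orthogonal $G_n$-decomposition $\mathcal{H} = E_1^{(n)} \oplus \cdots \oplus E_{k_n}^{(n)} \oplus \mathcal{K}^{(n)}$ into non-degenerate finite-dimensional irreducible pieces and a unitary remainder. The crucial constraint is that the positive Witt index of $\mathcal{H}$ equals $q<\infty$, so the total positive index carried by the $E_i^{(n)}$'s is bounded by $q$ uniformly in $n$. Combining this signature bound with the classification of form-preserving finite-dimensional irreducible representations of $G_n$, one argues that for $n$ large each non-unitary summand must be (a twist of) the tautological representation of $G_n$ on $\K^{p,n}$, and that only one such summand can occur: indeed, the source $\PO_\K(p,\infty)$ is topologically simple (established earlier in the paper), hence admits no nontrivial continuous finite-dimensional orthogonal representation, so the nested sequence of $E_i^{(n)}$'s cannot stabilize to a proper $\PO_\K(p,\infty)$-invariant finite-dimensional subspace.

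Passing to the limit $n\to\infty$, the directed family of standard copies of $\K^{p,n}$ inside $\mathcal{H}$ assembles $\PO_\K(p,\infty)$-equivariantly into a closed non-degenerate invariant subspace $E\subset \mathcal{H}$ isometrically isomorphic to $\K^{p,\infty}$. The existence of such an isometric embedding of quadratic (or Hermitian) spaces $\K^{p,\infty}\hookrightarrow \mathbf{L}^{q,\infty}$ forces $\K=\mathbf{L}$ and $q\geq p$. The orthogonal complement $E^\perp$ is then $\PO_\K(p,\infty)$-invariant with definite form, so $\rho$ restricts to a unitary representation on $E^\perp$; any nonzero such summand produces a proper $\rho(\PO_\K(p,\infty))$-invariant totally geodesic subspace of $\calX_\mathbf{L}(q,\infty)$ (the symmetric space of $E$), violating geometric density. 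Hence $E^\perp=0$, which forces $q=p$, and the $\PO_\K(p,\infty)$-equivariant isomorphism $\K^{p,\infty}\to\mathbf{L}^{q,\infty}$ is exactly the datum needed to conclude that $\rho$ is induced by an isomorphism of quadratic spaces.

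The main obstacle is the second step: since distinct $G_n$ and $G_{n'}$ do not commute, the decompositions they induce do not refine one another in any simple way, and controlling the $E_i^{(n)}$'s uniformly in $n$ is delicate. The uniform Witt-index bound coming from the finiteness of $q$ provides the essential quantitative leverage, forcing the finite-dimensional non-unitary summands to be rare and bounded on the positive side; together with the topological simplicity of $\PO_\K(p,\infty)$ and the absence of nontrivial finite-dimensional continuous representations of this group, this pins the relevant summands down to the tautological representation and yields the desired compatibility in the limit.
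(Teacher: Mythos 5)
Your overall strategy --- restrict $\rho$ to an exhausting family of finite-dimensional higher-rank subgroups, apply higher-rank rigidity to each, and assemble --- is the same as the paper's (which uses $G_E=\PSO_\K(E)$ for $E$ ranging over all finite-dimensional non-degenerate subspaces of index $p$, and applies Proposition~\ref{toteq} rather than Theorem~\ref{splitting}). But three of your steps have genuine gaps. First, your verification that $G_n$ has no invariant totally isotropic subspace does not work: if $V$ is $G_n$-invariant and totally isotropic, the translates $\rho(g)V$ for $g$ in the normalizer of $G_n$ do not ``generate'' a totally isotropic invariant subspace (a span of isotropic subspaces need not be isotropic), the normalizer of $G_n$ in $\PO_\K(p,\infty)$ is essentially $G_n$ times the commuting orthogonal group of $E_n^\bot$ rather than something whose union is all of $\PO_\K(p,\infty)$, and in any case invariance under each $G_n$ separately is not inherited by the group they generate. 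What is actually true --- and what the paper spends roughly half its proof establishing --- is that $G_E$ has no fixed point in $\calX_{\mathbf{L}}(q,\lambda)$ or at infinity only for $E$ \emph{large enough}; this requires the filtering-family fixed-point results (Theorem~\ref{cl}, Proposition~\ref{cl2}), the passage to the dense normal subgroup $\PSO^{\mathfrak{fr}}_\K(p,\kappa)$ via Proposition~\ref{normal}, and a careful analysis of the subspace $H_E$ of $G_E$-fixed lines. Second, the coherence problem you flag yourself (``the decompositions do not refine one another in any simple way'') is not resolved by your appeal to an unproven classification of form-preserving finite-dimensional irreducibles of $G_n$ with bounded Witt index; the paper sidesteps this entirely by working with the symmetric space: once $G_E$ has no fixed point at infinity, Lemma~\ref{lem:min} gives a \emph{unique} minimal $G_E$-invariant totally geodesic subspace $\mathcal{Y}_E$, and uniqueness forces $\mathcal{Y}_E\subset\mathcal{Y}_{E'}$ for $E\le E'$ with compatible embeddings, for free.

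Third, your endgame is too quick. An ``isometric embedding of quadratic spaces $\K^{p,\infty}\hookrightarrow\mathbf{L}^{q,\infty}$'' does not by itself force $\K=\mathbf{L}$: for instance $\R^{p,\infty}$ sits inside $\C^{p,\infty}$ and the complexified tautological representation of $\PO_\R(p,n)$ preserves a Hermitian form of index $p$ and is even irreducible over $\C$; what rules this out is geometric density (the real structure yields an invariant totally geodesic copy of $\calX_\R(p,\infty)$ inside $\calX_\C(p,\infty)$), not the mere existence of the embedding. The paper derives $\K=\mathbf{L}$, $p=q$ and the semilinear map inducing $\rho$ by showing the union of the $\mathcal{Y}_E$ is dense (geometric density again, via Proposition~\ref{normal}), hence all of $\calX_{\mathbf{L}}(q,\lambda)$, and then applying Tits' classification of polar-space isomorphisms (\cite[Theorem 8.6.II]{MR0470099}) exactly as in Theorem~\ref{isomgroup}. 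I would encourage you to rework your argument around the unique minimal invariant totally geodesic subspace and the building at infinity; as written, steps one through three each need a new idea.
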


The assumption about geometric density implies that the representation is irreducible and in finite dimension this is equivalent to Zariski density. See \cite[\S3]{duchesne2018boundary} for a discussion thereon. As a corollary, we get an understanding of the continuous automorphism group.

\begin{cor}The group of continuous automorphisms  $\Aut_c(\PO_\K(p,\infty))$ is $\Isom\left(\calX_\K(p,\infty)\right)$.\end{cor}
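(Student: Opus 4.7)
\medskip

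\textbf{Proof plan.} The strategy is to exhibit the desired equality as an isomorphism of groups via the natural ``conjugation'' map
\[
\Phi : \Isom\!\left(\calX_\K(p,\infty)\right) \longrightarrow \Aut_c\!\left(\PO_\K(p,\infty)\right), \qquad g \longmapsto \bigl(h \mapsto ghg\inv\bigr).
\]
Since $\PO_\K(p,\infty)$ sits as a normal subgroup of $\Isom(\calX_\K(p,\infty))$ and the action on the symmetric space is continuous for the topology considered on $\PO_\K(p,\infty)$, the map $\Phi$ is well defined and takes values in continuous automorphisms. The plan is to prove that $\Phi$ is bijective; Theorem~\ref{quad} will supply surjectivity and a homogeneity argument will give injectivity.

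\textbf{Surjectivity.} Given $\phi \in \Aut_c(\PO_\K(p,\infty))$, its image is the whole group $\PO_\K(p,\infty)$, which by construction of the symmetric space $\calX_\K(p,\infty)$ acts geometrically densely (no fixed point at infinity, no proper invariant totally geodesic subspace). Thus $\phi$ is a continuous geometrically dense self-representation of $\PO_\K(p,\infty)$. Invoking Theorem~\ref{quad} with $\mathbf{L}=\K$ and $q=p$, the automorphism $\phi$ is induced by an isomorphism of the underlying quadratic $\K$-space (allowing the semilinear instances that produce the non-identity component of $\Isom$, such as complex or quaternionic conjugation). Any such quadratic-space isomorphism acts by isometries on $\calX_\K(p,\infty)$, yielding an element $g \in \Isom(\calX_\K(p,\infty))$ with $\Phi(g) = \phi$.

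\textbf{Injectivity.} Suppose $\Phi(g) = \Id$, i.e.\ $g$ centralizes $\PO_\K(p,\infty)$ inside $\Isom(\calX_\K(p,\infty))$. Fix a basepoint $x_0 \in \calX_\K(p,\infty)$ and let $K \le \PO_\K(p,\infty)$ be its stabilizer. Centralizing $K$ forces $g(x_0)$ to be a $K$-fixed point of $\calX_\K(p,\infty)$; as in the finite-dimensional symmetric spaces of non-compact type, the isotropy acts with a unique fixed point, so $g(x_0) = x_0$. Using transitivity of $\PO_\K(p,\infty)$ on $\calX_\K(p,\infty)$, for arbitrary $y = h\,x_0$ one obtains $g(y) = g\,h\,x_0 = h\,g(x_0) = y$, hence $g = \Id$.

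\textbf{Main obstacle.} The substantive input is Theorem~\ref{quad}; the rest is a matter of translating between the linear ``quadratic-space'' picture and the geometric ``symmetric-space'' picture. The only non-completely-formal check is the injectivity step, where one must ensure that the statement about isotropy groups of finite-dimensional symmetric spaces of non-compact type (that $K$ fixes only $x_0$) carries over to the infinite-dimensional spaces $\calX_\K(p,\infty)$; this should follow from the standard $K$-equivariant exponential map description of a neighborhood of $x_0$ available in this infinite-dimensional setting.
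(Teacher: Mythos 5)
Your proof is correct and follows exactly the route the paper intends: the paper states this corollary without writing out a proof, deducing it directly from Theorem~\ref{quad} (together with the identification $\Isom(\calX_\K(p,\infty))=\PGO_\K(p,\infty)$ of Theorem~\ref{isomgroup}), which is precisely your surjectivity step. Your injectivity argument via the triviality of the fixed-point set of a point stabilizer is sound --- the $K$-fixed vectors in $T_{x_0}\calX\cong\mathrm{Hom}(V,V^{\bot})$ are $0$ because $\OO_\K(V^{\bot})$ fixes no nonzero vector of $V^{\bot}$ --- and merely makes explicit a check the paper leaves to the reader.
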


This latter group is described in Theorem~\ref{isomgroup} where it is shown that $\Isom\left(\calX_\K(p,\infty)\right)$ is $\PO_\K(p,\infty)$ except in the complex case where the only non-trivial outer automorphism correspond to the complex conjugation. 

Since the topology on $\PO_\K(p,\infty)$ plays a role in the previous results, we take a quick look at this topological group and prove that, although abstractly it is not  simple, topologically it is.

 \begin{thm}\label{thmtopsimp} The Polish group $\PO_\C(p,\infty),\PO_\H(p,\infty)$ and $\PO_\R^o(p,\infty)$ are topologically simple.
 \end{thm}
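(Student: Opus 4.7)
The plan is to take a non-trivial closed normal subgroup $N\triangleleft G$, for $G$ any of $\PO_\C(p,\infty)$, $\PO_\H(p,\infty)$ or $\PO_\R^o(p,\infty)$, and prove $N=G$. As a first step, I would observe that the fixed-point set $\Fix(N)\subseteq\calX_\K(p,\infty)$ is a closed convex $G$-invariant subset of the symmetric space: closed and convex from the \cat{} geometry, and $G$-invariant because $N$ is normal. Transitivity of the $G$-action on $\calX_\K(p,\infty)$ then forces $\Fix(N)\in\{\emptyset,\calX_\K(p,\infty)\}$, and the second possibility would place $N$ in the kernel of $G\to\Isom(\calX_\K(p,\infty))$, which is trivial for the projective orthogonal groups. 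So $N$ has no global fixed point.

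Next I would exhibit $G$ as the closure of an ascending chain of finite-dimensional simple Lie subgroups. Fix a maximal positive-definite subspace $V^+$ of dimension $p$ and choose an increasing chain of non-degenerate finite-dimensional subspaces $V^+\subseteq V_1\subseteq V_2\subseteq\cdots$ of the ambient Hilbert space $V$, each of signature $(p,n-p)$, with $\bigcup_n V_n$ dense in $V$. The subgroups $H_n:=\Stab_G(V_n^\perp)$ of elements acting as identity on $V_n^\perp$ identify with the connected component of $\PO_\K(p,n-p)$, a finite-dimensional simple Lie group for $n-p\geq 2$. In the topology of the paper (pointwise convergence on $\calX_\K(p,\infty)$), the union $\bigcup_n H_n$ is dense in $G$.

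For the key step I would pick $g\in N\setminus\{e\}$ and produce a non-trivial element of $N\cap H_m$ for some $m$. Since $g$ is non-central in $G$ (the projective center being trivial) and commutation with $g$ is a closed condition on $G$, $g$ cannot commute with the dense set $\bigcup_n H_n$; hence for some $n$ there is $h\in H_n$ with $[g,h]\ne e$. Now $h$ fixes $V_n^\perp$ and $ghg^{-1}$ fixes $gV_n^\perp$, so $[g,h]=(ghg^{-1})h^{-1}\in N$ fixes $V_n^\perp\cap gV_n^\perp=(V_n+gV_n)^\perp$. Thus $[g,h]$ belongs to the pointwise stabilizer of $(V_n+gV_n)^\perp$ in $G$, which, once $V_n+gV_n$ is non-degenerate (a generic situation, arranged by a small perturbation of $h$ or a mild enlargement of $n$ if necessary), is $G$-conjugate to some $H_m$ by Witt-type transitivity. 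Transporting by the conjugating element provides a non-trivial element of $N\cap H_m$; simplicity of the finite-dimensional Lie group $H_m$ then forces $H_m\subseteq N$, and closedness of $N$ together with density of $\bigcup_n H_n$ yields $N=G$.

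The main technical obstacle is ensuring non-degeneracy of the finite-dimensional subspace $V_n+gV_n$ in the commutator step: if it happens to be degenerate, its pointwise stabilizer is a parabolic subgroup rather than a simple Lie group, and the simplicity argument breaks down; the fix is to replace $h$ by a conjugate (or enlarge $n$) so that $V_n+gV_n$ becomes non-degenerate, which can be done because non-degeneracy is an open condition on the relevant configuration of finite-dimensional subspaces. The three cases in the statement are handled uniformly: $\PO_\C(p,\infty)$ and $\PO_\H(p,\infty)$ are already connected, while in the real case the restriction to the orthochronous component $\PO_\R^o(p,\infty)$ preserves connectedness and simplicity of the finite-dimensional building blocks $H_n$.
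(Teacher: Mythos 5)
Your proposal is correct and follows essentially the same route as the paper: the paper likewise exhibits a dense subgroup built out of finite-dimensional simple Lie groups (the finite-rank perturbations $\PSO_\K^{\mathfrak{fr}}(p,\infty)$, shown to be abstractly simple and dense via Witt's theorem in Lemma~\ref{frdense} and Proposition~\ref{dense}) and then runs your commutator argument in abstract form (Lemma~\ref{normalsb} and Proposition~\ref{topsimp}): a normal subgroup that meets this dense simple subgroup trivially must centralize it, hence be central, hence trivial. One spot in your write-up needs repair: the subspace $V_n+gV_n$ depends only on $g$ and $V_n$, so ``perturbing $h$'' cannot restore non-degeneracy; the correct and easy fix --- which is exactly what the paper's Proposition~\ref{OE} does --- is to enlarge $V_n+gV_n$ to a non-degenerate finite-dimensional subspace $W$ of index $p$, noting that $[g,h]$ is a finite-rank perturbation of the identity and therefore still acts trivially on $W^{\bot}$. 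Finally, the density of $\bigcup_n H_n$ in the pointwise-convergence topology, which you assert without argument, is precisely the content of Lemma~\ref{frdense} and does require the Witt-theorem construction given there.
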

 Here $\OO^o_\R(p,\infty)$ denotes the orthochronous component of $\OO_\R(p,\infty)$. See \S~\ref{sec_top_simp}.
 
  \begin{rem} All the results in this introduction are stated for the infinite countable cardinal $\aleph_0$, merely denoted by $\infty$ here. But the proofs deal with any cardinal $\kappa$ except for the fact that $\PO_\K(p,\kappa)$ is Polish, which holds only when $\kappa$ is countable. Theorems are restated in this larger generality in the body of the text.
 \end{rem}
 
\begin{rem} It is asked in \cite[\S1.3]{AHL_2019__2__259_0} if the group $\PO(1,\infty)$ has the automatic continuity. A Polish group $G$ has this property if any homomorphism to a separable topological group is continuous. The same question can be asked for $\PO_\K(p,\infty)$ and any finite value of $p$. If this property holds for $\PO_\K(p,\infty)$ then the continuity assumption in Theorem~\ref{quad} and its corollary can be removed.
A general study of $\PO_\K(p,\infty)$ as a Polish group should be the subject of a future work.
\end{rem}

\tableofcontents

\section{Symmetric spaces of infinite dimension}

By a Riemannian manifold, we mean a (possibly infinite dimensional) smooth manifold modeled on some real Hilbert space with a smooth Riemannian metric. For a background on infinite dimensional Riemannian manifolds, we refer to \cite{MR1666820} or \cite{MR2243772}. 

Let $(M,g)$ be a Riemannian manifold, a \emph{symmetry} at a point $p\in M$ is an involutive isometry  $\sigma_p\colon M\to M$ such that $\sigma_p(p) = p$ and the differential at $p$ is $-\Id$. A \emph{Riemannian symmetric space} (or simply a symmetric space) is a connected Riemannian manifold such that, at each point, there exists a symmetry. See \cite[\S3]{BD15} for more details.

A natural infinite dimensional generalization of the symmetric space associated to  $\SL_n(\RR)$ is obtained with the following construction. Let $\mathcal{H}$ be a real Hilbert space and $L^2(\mathcal{H})$ be the space of Hilbert-Schmidt operators on $\mathcal{H}$. Let $S^2(\mathcal{H})$ be the subset of $L^2(\mathcal{H})$ given by self-adjoint operators and let $P^2(\mathcal{H})=\exp\left(S^2(\mathcal{H})\right)$, the set of Hilbert-Schmidt perturbations of the identity that are positive definite. This space $S^2(\mathcal{H})$ is a symmetric space of non-positive curvature.

As in finite dimension, the Riemann tensor and the curvature operator can be defined for Riemannian manifolds of infinite dimension. Under the assumption of separability and non-positivity of the curvature operator, a classification of such symmetric spaces has been obtained in \cite[Theorem 1.8]{BD15} and they are analogs of the classical ones. 

The non-positivity of the curvature operator is a natural but stronger condition than non-positivity of the sectional curvature. In particular, symmetric spaces with non-positive sectional curvature and no local de Rham factor are automatically simply connected and thus \cat \cite[Proposition 4.1]{BD15}.

\begin{defn} Let $\mathcal{X}$ be a Riemannian manifold. A submanifold $\mathcal{Y}$ is said to be \emph{totally geodesic} if  for any geodesic $\gamma\colon I\to X$, where $I$ is a real open interval containing $0$, with initial conditions $\left(\gamma(0),\gamma'(0)\right)$ in the tangent bundle of $\mathcal{Y}$, $\gamma(I)$ is contained in $\mathcal{Y}$.
\end{defn}

The \emph{rank} of a symmetric space with non-positive sectional curvature is defined as the supremum of the dimensions of totally geodesic embedded Euclidean spaces. The symmetric space $P^2(\mathcal{H})$ has infinite rank and for example, one can find three points that are not contained in any finite dimensional totally geodesic subspace \cite[Example 2.6]{duchesne2018boundary}.

One can also construct  infinite dimensional symmetric spaces of non-positive curvature and finite rank. Let us describe them. Let $\H$ denotes the division algebra of the quaternions, and $\mathcal {H}$ be a Hilbert space over $\K=\R$, $\C$ or $\H$ with a Hilbert basis of cardinality $\kappa$. In case $\K=\H$,  the scalar multiplication is understood to be on the right. Let $p\in\NN$. We fix an orthonormal basis $(e_i)_{i\in p+\kappa}$ of the separable Hilbert space $\mathcal{H}$, and we consider the quadratic form 
$$Q(x)=\sum_{i\leq p}\ov x_ix_i-\sum_{i\in \kappa}\ov x_ix_i$$ where  $x=\sum e_ix_i$. The space 
 
$$\calX_{\mathbf K}(p,\kappa)=\left\{V\leq\mathcal{H},\ \dim_\K(V)=p,\ Q|_V>0\right\}$$  

has a structure of symmetric space of non-curvature (see \cite{MR3044451}). The rank of $\calX_{\mathbf K}(p,\kappa)$ is exactly $p$. Actually, separable symmetric spaces of non-positive curvature operator and finite rank are classified \cite[Corollary 1.10]{BD15}. They split as a finite product of finite dimensional symmetric spaces of non-compact type and copies of $\calX_{\mathbf K}(p,\infty)$. These infinite dimensional symmetric spaces have the particularity that any finite configurations of points, flats subspaces or points at infinity are contained in a finite dimensional totally geodesic subspace \cite[Proposition 2.6]{MR3044451}.\\ 

The following characterizations of totally geodesic subspaces of symmetric spaces are well-known in finite dimension (it follows from the work of Mostow \cite{MR0069829}). In particular, one can characterize them without referring to the differential structure. The knowledge of geodesics or the symmetries is sufficient. 

\begin{prop}\label{theo}Let $\mathcal{X}$ be a separable symmetric space with non-positive operator curvature and finite rank or $\mathcal{X}=\calX_\K(p,\kappa)$. Let $\mathcal{Y}$ be a non-empty closed subspace of $\mathcal{X}$. Let $\sigma_x$ be the symmetry at $x\in \mathcal{X}$.The followings are equivalent:
\begin{enumerate}
\item The subspace $\mathcal{Y}$ is a totally geodesic submanifold of $\mathcal{X}$.
\item For any distinct points $x,y\in \mathcal{Y}$, the unique geodesic of $\mathcal{X}$ containing $x$ and $y$ is included in $\mathcal{Y}$.
\item The subspace $\mathcal{Y}$ is convex and satisfies $\sigma_x(\mathcal{Y})=\mathcal{Y}$ for any $x\in \mathcal{Y}$.
\item The subspace $\mathcal{Y}$ is a connected submanifold such that the Riemannian distance of $\mathcal{Y}$ (coming from the Riemannian structure of $\mathcal{Y}$ induced by $\mathcal{X}$) is equal to the restriction to $\mathcal{Y}$ of the Riemannian distance of $\mathcal{X}$.
\item There exists $x\in \mathcal{Y}$ such that $\mathcal{Y}=\exp_x(E)$ for some closed linear subspace  $E\leq T_x\mathcal{X}$ such that $E$ is a Lie triple system when $T_x\mathcal{X}$ is identified to $\mathfrak{p}$ in the Cartan decomposition $\mathfrak{g}=\mathfrak{t}\oplus\mathfrak{p}$ relative to $x$.
\item For any $x\in \mathcal{Y}$,  $\mathcal{Y}=\exp_x(E)$ for some closed linear subspace  $E\leq T_x\mathcal{X}$ such that $E$ is a Lie triple system when $T_x\mathcal{X}$ is identified to $\mathfrak{p}$ in the Cartan decomposition $\mathfrak{g}=\mathfrak{t}\oplus\mathfrak{p}$ relative to $x$.
\end{enumerate}
\end{prop}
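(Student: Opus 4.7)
My plan is to establish the cycle $(5)\Rightarrow(6)\Rightarrow(1)\Rightarrow(2)\Rightarrow(3)\Rightarrow(5)$, and then treat $(1)\Leftrightarrow(4)$ separately. The essential engine throughout is the finite-dimensional reduction available for our spaces: any finite subset of $\mathcal{X}$ is contained in a finite-dimensional totally geodesic subspace. This is \cite[Proposition 2.6]{MR3044451} for $\calX_\K(p,\kappa)$, and the classification \cite[Corollary 1.10]{BD15} propagates it to the general finite-rank case since each factor enjoys this property. With this in hand, every structural question about $\mathcal{Y}$ is reduced to Mostow's classical finite-dimensional theorem.

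The easy implications come first. $(5)\Rightarrow(6)$ uses that transvections along geodesics are isometries of $\mathcal{X}$ conjugating Cartan decompositions at different basepoints, so the Lie triple system transports. $(6)\Rightarrow(1)$ is the classical fact that the image of a Lie triple system under $\exp_x$ is a totally geodesic submanifold; the argument carries over because $\exp_x$ is a global diffeomorphism on these \cat spaces and $E$ is closed. $(1)\Rightarrow(2)$ is immediate from uniqueness of geodesics in a \cat space: the connecting $\mathcal{Y}$-geodesic exists (by path-connectedness of the submanifold and the local form of totally geodesic) and must coincide with the $\mathcal{X}$-geodesic. For $(2)\Rightarrow(3)$, convexity is clear and $\sigma_x(\mathcal{Y})\subseteq \mathcal{Y}$ since $\sigma_x$ reverses every geodesic through~$x$, which lies in $\mathcal{Y}$ by $(2)$.

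The heart of the argument is $(3)\Rightarrow(5)$. Fix $x\in\mathcal{Y}$ and define $E=\log_x(\mathcal{Y})\subseteq T_x\mathcal{X}$. The set $E$ is closed by closedness of $\mathcal{Y}$ and continuity of $\log_x$. To show $E$ is a real linear subspace and a Lie triple system, pick any finitely many $v_1,\dots,v_n\in E$; the points $x,\exp_x(v_1),\dots,\exp_x(v_n)$ sit in a finite-dimensional totally geodesic subspace $\mathcal{Z}$ of $\mathcal{X}$, and $\mathcal{Y}\cap\mathcal{Z}$ manifestly inherits property $(3)$ inside $\mathcal{Z}$ (convexity is hereditary and the symmetry $\sigma_x$ of $\mathcal{X}$ restricts to the symmetry of $\mathcal{Z}$ at $x$ since $\mathcal{Z}$ is totally geodesic and $\sigma_x$-invariant). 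The finite-dimensional theorem of Mostow gives that $\mathcal{Y}\cap\mathcal{Z}$ is totally geodesic in $\mathcal{Z}$, so $E\cap T_x\mathcal{Z}=\log_x(\mathcal{Y}\cap\mathcal{Z})$ is a Lie triple system containing $v_1,\dots,v_n$. Since the linearity and triple-bracket conditions are equations in continuous multilinear operations on $T_x\mathcal{X}$, and are satisfied on every finite-dimensional slice, they pass to the closed set $E$.

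For $(1)\Leftrightarrow(4)$, the forward direction follows because a totally geodesic submanifold inherits its distance from the ambient $\mathcal{X}$-geodesics, which are both intrinsic and extrinsic minimizers. The converse is the classical statement that distance agreement forces submanifold geodesics to be ambient geodesics, hence tangent-stability of $T\mathcal{Y}$; for points $x,y\in\mathcal{Y}$ one again reduces to a finite-dimensional totally geodesic subspace containing them and invokes the classical result. The main obstacle I anticipate is step $(3)\Rightarrow(5)$: one must confirm that $E$ is genuinely a closed real linear subspace rather than merely a union of finite-dimensional Lie triple systems. This is guaranteed by the fact that linearity, being a closed condition cut out by continuous operations, is preserved under the exhaustion of $E$ by its intersections with the tangent spaces $T_x\mathcal{Z}$ arising from the finite-dimensional reduction.
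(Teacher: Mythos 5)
Your overall engine---reduce to the classical finite--dimensional theory via the fact that finitely many points of $\mathcal{X}$ lie in a finite--dimensional totally geodesic subspace---is the same one the paper uses, and your $(2)\Rightarrow(3)$ and $(3)\Rightarrow(5)$ (the analogue of the paper's $(2)\Rightarrow(5)$) are sound. But there is a genuine gap at $(1)\Rightarrow(2)$, which you declare ``immediate''. Two distinct problems. First, the existence of a geodesic \emph{of $\mathcal{Y}$} joining two given points of $\mathcal{Y}$ is not free in infinite dimensions: Hopf--Rinow fails for Riemannian Hilbert manifolds (there are complete connected examples in which two points are joined by no geodesic whatsoever), so ``path-connectedness plus the local form of totally geodesic'' does not produce the connecting $\mathcal{Y}$-geodesic. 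Second, your finite--dimensional reduction cannot rescue this implication: condition (1) is a differential condition, and for a finite--dimensional totally geodesic $\mathcal{Z}$ containing $x$ and $y$ the intersection $\mathcal{Y}\cap\mathcal{Z}$ has no reason to be a submanifold of $\mathcal{Z}$ (there is no transversality), so (1) is not inherited on slices the way the synthetic conditions (2) and (3) are. The same objection applies to your sketch of $(4)\Rightarrow(1)$. A related circularity affects $(5)\Rightarrow(6)$: to know that the transvection from $x$ to $y\in\mathcal{Y}$ preserves $\mathcal{Y}=\exp_x(E)$ you already need to know that $\exp_x(E)$ is totally geodesic and invariant under these transvections, i.e.\ essentially $(5)\Rightarrow(1)$, which in the Banach--Lie setting (closedness of $[E,E]+E$, integrability of the subalgebra) is precisely the non-trivial input you wave through with ``the argument carries over''.

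The paper resolves all of this at one stroke: it embeds $\mathcal{X}$ totally geodesically into $S^2(\mathcal{H})$ and invokes de la Harpe's Proposition III.4 for the equivalence of (1), (5) and (6) there; the only implication it then needs \emph{out of} the differential conditions is $(6)\Rightarrow(2)$, which is genuinely immediate (write $y=\exp_x(v)$ with $v\in E$; the geodesic through $x$ and $y$ is $t\mapsto\exp_x(tv)$ and stays in $\exp_x(E)$ because $E$ is linear). Your cycle would be repaired by substituting that citation (or an honest proof of $(1)\Rightarrow(5)$) for your $(1)\Rightarrow(2)$ and routing the exit from (1) through (6). For $(4)\Rightarrow(2)$ the paper also argues differently and more robustly: distance agreement plus closedness of $\mathcal{Y}$ gives convexity, and the submanifold structure lets one prolong geodesic segments, which extend to full lines by closedness.
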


\begin{proof} Since $\mathcal{X}$ is a symmetric space with non-positive curvature operator, we know that it can be embedded as a totally geodesic manifold of $S^2(\mathcal{H})$ \cite{BD15}. Thus, it suffices to consider the case where $\mathcal{X}$ is $S^2(\mathcal{H})$. In that case, the correspondence between totally geodesic submanifolds and Lie triple system has been observed in \cite[Proposition III.4]{MR0476820}. So we know that (1), (5) and (6) are equivalent.

Let us prove that (2) and (3) are equivalent. It is clear that if $\mathcal{Y}$ satisfies (2) then it is convex. Moreover, for any $x\neq y$, $\sigma_x(y)$ lies on the geodesic line through $x$ and $y$. Thus $\mathcal{Y}$ satisfies (3). Conversely if $\mathcal{Y}$ satisfies (3) then for any $x\neq y\in\mathcal{Y}$, the geodesic segments $\left[\left(\sigma_x\circ\sigma_y\right)^n(y), \left(\sigma_y\circ\sigma_x\right)^n(x)\right]$ is contained in $\mathcal{Y}$ for all $n\in\ZZ$. Since their union is the whole geodesic through $x$ and $y$, this geodesic is contained in $\mathcal{Y}$.

Assume that $\mathcal{Y}$ satisfies (6) then for any $x\neq y$ in $\mathcal{Y}$, there is $v\in F$ such that $y=\exp_x(v)$. Since the geodesic through $x$ and $y$ is the image of $t\mapsto \exp_x(tv)$ for $t\in\RR$, this geodesic is contained in $\mathcal{Y}$ and thus $\mathcal{Y}$ satisfies (2). 

Now assume $\mathcal{Y}$ satisfies (2). For a finite subset $F\subset \mathcal{Y}$, $F$ lies in some finite dimensional totally geodesic subspace $\mathcal{Z}_F$. The intersection $\mathcal{Y}_F=\mathcal{Y}\cap\mathcal{Z}_F$ is a totally geodesic subspace of $\mathcal{Z}_F$ and thus there is a finite dimensional subspace $E_F$ of $T_{y_1}\mathcal{X}$, that is a Lie triple system (by the result in finite dimension) and such that $\exp_{y_1}(E_F)=\mathcal{Y}_F$. Since $\exp_{y_1}\colon T_{y_1}\mathcal{X}\to\mathcal{X}$ is a homeomorphism, it induces a homeomorphism from $E$, the closure of $\cup_F E_F$ (where $F$ is any finite subset of $\mathcal{Y}$) to $\mathcal{Y}$ and thus $\mathcal{Y}$ satisfies (5).

It is clear that (2) implies (4). Conversely, since the distance on $\mathcal{Y}$ coincides with the one in $\mathcal{X}$, $\mathcal{Y}$ is convex. Moreover since $\mathcal{Y}$ is a submanifold, any geodesic segment can be enlarged a bit and since $\calX$ is closed, any geodesic segment can be extended to a whole geodesic line. Thus (2) is satisfied.
\end{proof}
The following corollary is an immediate consequence of characterization (1).

\begin{cor}\label{cor_min_tot} Any non-empty intersection of totally geodesic subspaces of $\mathcal{X}$ is a totally geodesic subspace. In particular, any subset of $\mathcal{X}$ is contained in a unique minimal totally geodesic subspace of $\mathcal{X}$.

\end{cor}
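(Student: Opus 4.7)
The plan is to apply characterization (2) of Proposition~\ref{theo} (or equivalently characterization (3)). Let $(\mathcal{Y}_i)_{i\in I}$ be a family of totally geodesic subspaces of $\mathcal{X}$ whose intersection $\mathcal{Y} = \bigcap_{i \in I} \mathcal{Y}_i$ is non-empty. Since each $\mathcal{Y}_i$ is closed, $\mathcal{Y}$ is closed, so the hypotheses of Proposition~\ref{theo} are satisfied. To verify (2), I take two distinct points $x,y \in \mathcal{Y}$; by uniqueness of geodesics in $\mathcal{X}$ (which holds since $\mathcal{X}$ is \cat, as recalled earlier in the section), there is a single geodesic $\gamma$ of $\mathcal{X}$ through $x$ and $y$, and applying (2) to each $\mathcal{Y}_i$ shows that $\gamma \subseteq \mathcal{Y}_i$ for every $i$, hence $\gamma \subseteq \mathcal{Y}$. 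The proposition then yields that $\mathcal{Y}$ is totally geodesic.

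For the ``in particular'' clause, given any subset $S \subseteq \mathcal{X}$, I consider the family $\mathcal{F}_S$ of all totally geodesic subspaces of $\mathcal{X}$ that contain $S$. The family is non-empty since $\mathcal{X}$ itself qualifies, and $\bigcap_{\mathcal{Z} \in \mathcal{F}_S} \mathcal{Z}$ contains $S$, hence is non-empty. By the first part it is a totally geodesic subspace, and by construction it is contained in every member of $\mathcal{F}_S$, so it is the unique minimal totally geodesic subspace containing $S$.

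There is no real obstacle here: the only subtlety is confirming the existence and uniqueness of the geodesic segment between two points, which is exactly the \cat property already established for the spaces $\mathcal{X}$ considered in Proposition~\ref{theo}. Alternatively, one could run the same argument with characterization (3): the intersection of convex sets is convex, and if each $\mathcal{Y}_i$ is stable under $\sigma_x$ for every $x \in \mathcal{Y}_i$, then $\mathcal{Y}$ is stable under $\sigma_x$ for every $x \in \mathcal{Y}$, so Proposition~\ref{theo} again applies.
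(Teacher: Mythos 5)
Your proof is correct and matches the paper's intent: the paper simply declares the corollary ``an immediate consequence'' of one of the characterizations in Proposition~\ref{theo}, and your argument via characterization (2) (or (3)), both of which are manifestly stable under intersection, is exactly the immediate argument being alluded to. The only cosmetic point is that the paper cites characterization (1), whereas it is really (2) or (3) that makes the intersection property obvious --- which is precisely what you used.
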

\section{Isometry groups of infinite dimensional Riemannian symmetric spaces of finite rank}

\subsection{Full isometry group} In the remaining of this paper, $\K$ denotes either the real, complex or quaternionic numbers. Let $\mathcal{H}$ be a $\KK$-Hilbert space of with a strongly non-degenerate quadratic form $Q$ of signature $(p,\kappa)$ where $p\in\NN$ and $\kappa\geq p$ is some finite or infinite cardinal. This means that 
\begin{itemize}
\item there is a $Q$-orthogonal decomposition $\mathcal{H}=\mathcal{H}_+\oplus\mathcal{H}_-$, 
\item if $\varphi$ is the Hermitian form  obtained by polarization of $Q$ and $\varphi_\pm$ are the restrictions of $\varphi$ on $\mathcal{H}_\pm$ then $\left(\mathcal{H}_+,\varphi_+\right)$ and $\left(\mathcal{H}_-,-\varphi_-\right)$ are Hilbert spaces with  Hilbert bases of cardinality respectively $p$ and $\kappa$,
\item moreover, the Hermitian form $\varphi_+-\varphi_-$ is positive definite on $\mathcal{H}$  and equivalent to the scalar product on $\mathcal{H}$.
\end{itemize}
For more details about strongly non-degenerate quadratic forms, one may have a look at \cite[\S2]{MR2152540}.
We are essentially interested in the case where $\kappa$ is infinite but at least we assume that $p+\kappa\geq4$. If $\kappa$ is the infinite countable cardinal, we denote it by $\kappa=\infty$ as in the introduction.
 
We denote the orthogonal group of $Q$ by $\OO_\KK(p,\kappa)$ and its intersection with the set of finite rank perturbations of the identity by $\Ofr(p,\kappa)$.  Let us recall that a finite rank perturbation of the identity is an operator of the form $\Id+ A$ where $A$ is a finite rank operator. The center of $\OO_\KK(p,\kappa)$ is the set of homotheties $\lambda \Id$ where $\lambda\in \K$, $|\lambda|=1$ and $\lambda\in\mathcal{Z}(\K)$, the center of $\K$. The center of $\OO_\KK(p,\kappa)$ is thus $\pm\Id$ for $\K=\RR$ or $\H$ and isomorphic to $\mathbf{S}^1$ for $\K=\CC$. We define $\PO_\K(p,\kappa)$ to be the quotient of $\OO_\K(p,\kappa)$ by the its center $\mathcal{Z}\left(\OO_\K(p,\kappa)\right)$. 

By construction $\PO_\K(p,\kappa)$ acts by isometries on $\calX_\KK(p,\kappa)$ and it is proved that when $\KK=\RR$, $\PO(p,\kappa)=\Isom\left(\calX_\KK(p,\kappa)\right)$. As explained in \cite{monod2018notes} for $p=1$, $\Isom\left(\calX_\CC(1,\kappa)\right)$ is the union of the holomorphic isometries and the antiholomorphic isometries. This comes from the complex conjugation which is the unique field automorphism of $\CC$ that preserves the absolute value. For the quaternionic numbers, one can  follows the strategy as in finite dimension to prove that $\Isom(\calX_\H(1,\kappa))=\PO_\K(1,\kappa)$, see \cite[II.10.17-21]{MR1744486}. 

Projective geometry is lurking in these statements and the fundamental theorem of projective geometry tells us that field automorphisms have some role to play. As it is well known, the field $\RR$ has no non-trivial field automorphisms and for the quaternions $\H$, all field automorphisms are inner and preserve the quaternionic absolute value. Since the center of $\H$ is $\RR$, one has $\Aut(\H)\simeq \H^\ast/\RR^\ast\simeq\SO(3)$. The former isomorphism is obtained by considering the two dimensional sphere of pure unit quaternions. Let us observe that the quaternionic conjugation is not a field automorphism since $\overline{xy}=\overline{y}\, \overline{x}$  which is in general different from $\overline{x}\,\overline{y}$.

Let us denote by $\Aut_c(\K)$ the group of continuous field automorphisms of $\K$ . It coincides with the group of automorphisms that preserve the absolute value ($\left|\sigma(x)\right|^2=\left|x\right|^2$ for $x\in\K$ and $\sigma\in\Aut(\K)$). It is trivial when $\K=\RR$, $\Aut_c(\CC)\simeq\ZZ/2\ZZ$ (the non-trivial automorphism being given by the conjugation) and $\Aut_c(\H)=\Aut(\H)\simeq\H^*/\RR^*\simeq\SO(3)$ is given by conjugations of elements of $\H^*$.
One can realize 
 each $\sigma\in\Aut_c(\K)$ as a $\R$-linear isomorphism of $\mathcal{H}$ that preserves the real part of the Hermitian form. It suffices to apply coordinate-wise the automorphism $\sigma$ in the base $(e_i)$. In particular, it maps $\K$-linear subspaces to $\K$-linear subspaces of the same dimension and preserves positivity. Thus it induces a bijection of $\calX_\K(p,\kappa)$ and the metric is invariant. Observe that this construction identifies $\Aut_c(\sigma)$ with a subgroup of the stabilizer of the span of $(e_1,\dots,e_p)$ in $\calX_\K(p,\kappa)$.
 
 \begin{rem}\label{rem_embedding} A possibility to see that each $\sigma\in\Aut_c(\K)$ induces an isometry is the following. Let $d=\dim_\R(\K)$ and let $\varphi_\RR=\Re(\varphi)$ be the real part of the Hermitian form $\varphi$ on the underlying real Hilbert space structure on $\mathcal{H}$. The real quadratic form $Q_\RR$ is a strongly non-degenerate quadratic form of signature $(dp,d\kappa)$ (where $d\kappa$ denotes the multiplication of cardinal numbers and thus $d\kappa=\kappa$ as soon as $\kappa$ is infinite since $d\leq4$). Considering any totally isotropic $\K$-linear space of dimension $p$ as a  totally isotropic $\RR$-linear space of dimension $dp$ gives an embedding of $\calX_\K(p,\kappa)$ as a totally geodesic submanifold of $\calX_\R(dp,d\kappa)$ and any element of $\OO_\K(p,\kappa)$ or of $\Aut_c(\K)$ act on $\calX_\R(dp,d\kappa)$ preserving both $\calX_\K(p,\kappa)$ and $\varphi_\RR$. Thus any such element induces an isometry of  $\calX_\K(p,\kappa)$.
 \end{rem}
 
 Our goal in this subsection is to identify the full isometry group of $\calX_\K(p,\kappa)$. So we need to understand how elements of $\OO_\K(p,\kappa)$ and $\Aut_c(\K)$ interact in $\Isom\left(\calX_\K(p,\kappa)\right)$ and see how they generate it. Let us recall a few vocabulary from projective geometry. A \emph{collineation} of the projective space $\PP\mathcal{H}$ is a bijection that preserves projective lines (they are also called projective automorphisms). We denote by $\PGaL(\mathcal{H})$ the group of collineations. Since the dimension of $\mathcal{H}$ is at least 3, the fundamental theorem of projective geometry tells us that this group is the image of the group of semilinear automorphisms $\GL(\mathcal{H})\rtimes\Aut(\K)$. Let us denote by $\PGO_\K(p,\kappa)$ the image of $\OO_\K(p,\kappa)\rtimes\Aut_c(\K)$ in  $\PGaL(\mathcal{H})$. Since $\R$ has no automorphisms, $\PGO_\R(p,\kappa)=\PO_\R(p,\kappa)$, for  the complex numbers $\PGO_\CC(p,\kappa)=\PO_\CC(p,\kappa)\rtimes \ZZ/2\ZZ$ and remarkably for the quaternions  $\PGO_\H(p,\kappa)=\PO_\H(p,\kappa)$ since the conjugation by $a\in\H^*$ induces the same collineation as the left multiplication by  $a$.
 
 In order to identify the isometry group $\Isom\left(\calX_\K(p,\kappa)\right)$, we introduce the context of Tits' fundamental work on spherical buildings \cite{MR0470099}. Let us recall briefly that a spherical building of dimension $n$ is a simplicial  complex with some special subcomplexs called appartements  and isomorphic to a tessellation of a real sphere of dimension $n$ associated to some spherical Coxeter group. It is proved in \cite[Proposition 5.2]{MR3044451} that the Tits boundary $\partial \calX_\K(p,\kappa)$ is a spherical building of dimension $p-1$ (in particular, this is a non-trivial structure as soon as $p\geq2$). Actually the proof is done in the case where $\kappa=\infty$ but the proof works for any cardinal as well.
 
 One can describe explicitly what are the simplices. They are in correspondance with isotropic flags, that are sequences $F_1<F_2<\dots<F_k$ where each $F_i$ is a  totally isotropic $\K$-linear subspace (and thus $k\leq p$). This  is proved in the real case in \cite[Proposition 6.1]{MR3044451}. This can be extended to the other cases via the embedding described in Remark \ref{rem_embedding} since an element of $\OO_\K(p,\kappa)$ that stabilizes a flag of isotropic $\RR$-linear subspaces for $Q_\RR$ also stabilizes a flag of $\K$-linear subspaces and vice-versa.
 
 In particular, vertices correspond to totally isotropic subspaces. The \emph{type} of a vertex is merely the dimension of the associated totally isotropic subspace. The \emph{polar space} $\mathcal{S}$ associated to this building (see \cite[\S7]{MR0470099} for general definitions) is the space of isotropic lines, i.e. vertices of type 1. Two points in $\mathcal{S}$ are \emph{collinear} if they are contained in a common totally isotropic plane, i.e. they are orthogonal. An \emph{automorphism} of this polar space is a bijection that preserves collinearity. 
 
 To any $\xi\in\partial \calX_\K(p,\kappa)$, one can associate its symmetric space at infinity $\calX_\xi$. In a general \cat context, this is defined as the quotient of the space of geodesics pointing to $\xi$ under the equivalence of being strongly asymptotic (see for example \cite[\S4.2]{MR2495801}). Two points $\xi,\eta\in\partial \calX_\K(p,\kappa)$ are \emph{opposite} if they are extremities of a common geodesic. If $\xi$ is a vertex and $\eta$ is opposite to $\xi$ then $\eta$ is a vertex of the same type and the span of $\xi$ and $\eta$ is a linear subspace of $\mathcal{H}$ of signature $(d,d)$ where $d$ is the type of $\xi$.  This follows from the fact that any geodesic line lies in some maximal flat subspace and from the description of these flats in \cite[\S3]{MR3044451}.
 
 Let us fix a vertex $\xi\in\calX_\K(p,\kappa)$ of type $d$. Let us choose $\eta$ opposite to $\xi$. The union $\Y$ of geodesics  with extremities $\xi,\eta$ is closed and invariant under symmetries $\sigma_x$ for $x\in\Y$, thus it is a totally geodesic subspace of $\calX_\K(p,\kappa)$. It splits as $\Y\simeq \R\times \calX_\xi$ where the $\R$-factor correspond the direction of a geodesic from $\xi$ to $\eta$. The isometry type of $\calX_\xi$ does not depend on $\eta$ since the stabilizer of $\xi$ in $\PO_\K(p,\kappa)$ acts transitively on points opposite to $\xi$. A point $x\in\X_\K(p,\kappa)$, i.e. a positive subspace of dimension $p$ lies in $\Y$ if and only if $x\cap\Span\left\{\xi,\eta\right\}$ has dimension $d$ and $x=x\cap\Span\left\{\xi,\eta\right\}\oplus x\cap\Span\left\{\xi,\eta\right\}^\bot$. Since $\Span\left\{\xi,\eta\right\}^\bot$ has signature $(p-d,\kappa)$, $\calX_\xi$ splits has $\calX_\xi\simeq\calX'_\xi\times \calX_\K(p-d,\kappa-d)$ where $\calX'_\xi$ is some finite dimensional symmetric space of non-compact type.
 
 For the next lemma, let us assume that $\kappa$ is an infinite cardinal, otherwise an identification (which is not very hard) of $\calX'_\xi$ is required.
 
 \begin{lem}\label{polar_aut} Let $p\geq2$. The group $\Isom\left(\calX_\KK(p,\kappa)\right)$ acts by type preserving automorphisms on the spherical building $\partial \calX_\K(p,\kappa)$.
 
 Moreover, $\Isom\left(\calX_\KK(p,\kappa)\right)$ acts by automorphisms on the polar space $\mathcal{S}$.
 \end{lem}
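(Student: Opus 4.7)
The plan is to split the lemma into three steps.

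First, any isometry $g \in \Isom(\calX_\K(p,\kappa))$ extends canonically to a bijection $\partial g$ of the Tits boundary $\partial \calX_\K(p,\kappa)$ preserving the Tits metric. The spherical building structure on $\partial \calX_\K(p,\kappa)$ can be recovered from this \textup{CAT}$(1)$ geometry: apartments are exactly the Tits boundaries of maximal flats of $\calX_\K(p,\kappa)$ (whose description is recalled in \cite[\S3]{MR3044451}), and $g$ sends maximal flats to maximal flats because the rank $p$ is a metric invariant. Within an apartment, walls are the fixed sets of reflection isometries and chambers are their complementary components, so these are also preserved by $\partial g$. Hence $\partial g$ acts as a simplicial automorphism of the building.

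Second, to prove type preservation, fix a vertex $\xi$ of type $d \in \{1,\ldots,p\}$. From the paragraph preceding the lemma,
\[
\calX_\xi \simeq \calX'_\xi \times \calX_\K(p-d,\kappa-d) = \calX'_\xi \times \calX_\K(p-d,\kappa),
\]
with $\calX'_\xi$ finite-dimensional of non-compact type and the second factor either a point (when $d = p$) or an infinite-dimensional symmetric space of rank $p-d$. In particular, the sum of the ranks of the infinite-dimensional de Rham factors of $\calX_\xi$ equals $p-d$. Since $g$ induces an isometry $\calX_\xi \simeq \calX_{g(\xi)}$ (the symmetric space at infinity being functorial in $g$), the integer $p-d$, and hence $d$, is invariant under $\partial g$, which therefore preserves the type function.

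Third, the polar space $\mathcal{S}$ is exactly the set of type-$1$ vertices of the building, and two such vertices are collinear if and only if they are both faces of a common type-$2$ vertex (the totally isotropic plane they span). Since $\partial g$ is simplicial and type preserving, it permutes type-$1$ vertices and preserves the existence of a common type-$2$ coface, i.e.\ it restricts to an automorphism of $\mathcal{S}$.

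The main obstacle anticipated is the first step, namely rigorously justifying that a Tits-metric isometry of $\partial \calX_\K(p,\kappa)$ respects the full simplicial building structure. The cleanest route, which avoids any appeal to abstract Tits-rigidity theorems in infinite dimension, is the one sketched above: apartments are preserved because maximal flats are metrically defined, and the combinatorics inside an apartment is Euclidean and thus automatically transported by an isometry.
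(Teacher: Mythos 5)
Your second and third steps are essentially the paper's own arguments: the type $d$ of a vertex $\xi$ is read off from the rank $p-d$ of the infinite-dimensional factor of the symmetric space at infinity $\calX_\xi$, which is an isometry invariant, and collinearity of type-$1$ vertices is then preserved because it is a simplicial, type-respecting notion (the paper phrases this metrically, Tits angle $\pi/2$ versus opposite, but your combinatorial version is equivalent once the first step is secured). The genuine gap is in your first step. The assertion that ``the combinatorics inside an apartment is Euclidean and thus automatically transported by an isometry'' is false as stated: an apartment is a round sphere equipped with a Coxeter tessellation, and the isometry group of the round sphere is far larger than the finite Coxeter group, so a metric isometry from one apartment onto another need not carry walls to walls or chambers to chambers. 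Likewise ``walls are the fixed sets of reflection isometries'' does not single out the building walls, since every great hypersphere is the fixed set of some reflection of the sphere; to use this you would have to specify which reflections are allowed (say, those induced by isometries of $\calX_\K(p,\kappa)$ stabilizing the flat) and prove that this recovers exactly the walls, which you do not do.

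What is needed, and what the paper's proof supplies, is a global metric characterization of the simplicial structure rather than an intra-apartment one: a point at infinity is regular when the geodesics pointing to it lie in a unique maximal flat; since isometries permute maximal flats (the rank being a metric invariant), they preserve the set of regular points; open chambers are the connected components of the regular set; and all lower-dimensional simplices, in particular the vertices, are recovered as intersections of closures of chambers, hence are also permuted. With that replacement for your first step, your remaining two steps go through unchanged.
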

 
 \begin{proof}Isometries preserve maximal flats and thus their boundaries which are appartements of the spherical building at infinity. Points in the interior of chambers (maximal simplices) correspond to extremities of regular geodesics, those contained in a unique maximal flat. Thus the set of these points is invariant by the isometries as well as chambers, which are connected components of the set of regular points at infinity. The other simplices are obtained as intersections of closure of chambers and thus invariant as well. In particular, vertices are minimal non-empty intersections of closure of chambers and thus the isometry group acts on the set of vertices.
 
 We claim that any two vertices have the same type if and only if they are in the same $\Isom\left(\calX_\KK(p,\kappa)\right)$-orbit. Witt theorem implies that $\PO_\K(p,\kappa)$ acts transitively on the set of vertices of a given type. For a vertex $\xi$ of type $d$, the infinite dimensional factor of $\X_\xi$ has rank $p-d$ and this rank is invariant by $\Isom\left(\calX_\KK(p,\kappa)\right)$.
 
 So, this proves that the action on the spherical building at infinity is by type-preserving automorphisms. Looking at vertices of type 1, we get an action on the polar space $\mathcal{S}$. For two distinct vertices of type 1, there are only two possibilities: they are collinear, that is their span is totally isotropic, that is they are orthogonal or their span has signature $(1,1)$. This can be recovered by geometric means, since in the first case, their Tits angle is $\pi/2$ where as they are opposite in the second case.
 
 So the action of the isometry group on the polar space $\mathcal{S}$ preserves collinearity.
 \end{proof}
 
\begin{thm}\label{isomgroup} Let $p\in\NN$. The isometry group of $\calX_\KK(p,\kappa)$ is $\PGO_\K(p,\kappa).$

More precisely,

\begin{itemize}
\item $\Isom(\calX_\RR(p,\kappa))=\PO_\RR(p,\kappa)$,
\item 	$\Isom(\calX_\CC(p,\kappa))=\PO_\CC(p,\kappa)\rtimes \ZZ/2\ZZ$,
\item $\Isom(\calX_\H(p,\kappa))=\PO_\H(p,\kappa)$.
\end{itemize}
	
\end{thm}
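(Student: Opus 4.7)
The plan is to reduce to the case $p\geq 2$ (the case $p=1$ being covered by the rank-one references recalled in the paragraph preceding Lemma~\ref{polar_aut}) and, in that range, to transfer any isometry of $\calX_\K(p,\kappa)$ to its Tits boundary and then apply projective geometry. The inclusion $\PGO_\K(p,\kappa)\leq\Isom(\calX_\K(p,\kappa))$ is already recorded in Remark~\ref{rem_embedding}, so the task is the reverse inclusion.

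Let $\phi\in\Isom(\calX_\K(p,\kappa))$ with $p\geq 2$. Lemma~\ref{polar_aut} provides an induced type-preserving automorphism of the spherical building $\partial\calX_\K(p,\kappa)$ and, in particular, an automorphism $\widehat\phi$ of the polar space $\mathcal{S}$ of isotropic $\K$-lines preserving collinearity. Since $\mathcal{S}$ is a thick classical polar space of rank $p\geq 2$ associated to the Hermitian form $\varphi$, Tits' theorem on automorphisms of polar spaces of rank at least $2$ (see \cite{MR0470099}), combined with the fundamental theorem of projective geometry applied inside the ambient projective space $\PP\mathcal{H}$, lifts $\widehat\phi$ to a $\sigma$-semilinear bijection $T\colon\mathcal{H}\to\mathcal{H}$ together with a scalar $\alpha\in\K^{\times}$ satisfying
\[
\varphi(Tv,Tw)=\alpha\,\sigma(\varphi(v,w)),\qquad v,w\in\mathcal{H},
\]
where $\sigma\in\Aut(\K)$, and the lift is unique up to an overall scalar on $T$.

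Next we clean up $T$. Continuity of $\phi$ together with the projective topology on $\mathcal{S}$ forces $\sigma$ to preserve the absolute value on $\K$, so $\sigma\in\Aut_c(\K)$. The Hermitian symmetry of $\varphi$ forces $\alpha=\bar\alpha$, hence $\alpha$ is real; and since $\phi$ preserves the set $\calX_\K(p,\kappa)$ of positive definite $p$-dimensional subspaces, the induced map on $p$-subspaces must send positive definite to positive definite, which rules out $\alpha<0$ (that would swap positive and negative $p$-subspaces). So $\alpha>0$. Replacing $T$ by $T/\sqrt{\alpha}$ does not alter its projective class and yields $\alpha=1$, so $T\in\OO_\K(p,\kappa)\rtimes\Aut_c(\K)$ and its image $[T]$ belongs to $\PGO_\K(p,\kappa)\leq\Isom(\calX_\K(p,\kappa))$.

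The last step is to identify $\phi$ with $[T]$. The composition $\psi:=\phi\circ[T]^{-1}$ acts trivially on $\mathcal{S}$; since every totally isotropic $\K$-subspace is the linear span of the isotropic lines it contains, $\psi$ fixes every vertex of $\partial\calX_\K(p,\kappa)$, hence every simplex, and thus the Tits boundary pointwise. Because $\calX_\K(p,\kappa)$ is a \cat symmetric space of non-compact type with no Euclidean factor, an isometry fixing the entire boundary pointwise is necessarily the identity (on each maximal flat it restricts to a Euclidean isometry fixing the whole sphere at infinity, hence to a translation, and the resulting parallel Killing field must vanish). Thus $\phi=[T]\in\PGO_\K(p,\kappa)$. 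The three bullet points then follow from the descriptions of $\Aut_c(\K)$ recalled before the statement: $\Aut_c(\R)=\{1\}$ in the real case, $\Aut_c(\C)\simeq\ZZ/2\ZZ$ in the complex case, and in the quaternionic case each $\sigma\in\Aut_c(\H)$ is inner and induces the same collineation of $\PP\mathcal{H}$ as a suitable left multiplication, so $\Aut_c(\H)$ is already absorbed into $\PO_\H(p,\kappa)$.

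The main obstacle is the lifting step: one has to verify carefully that Tits' building-theoretic classification and the fundamental theorem of projective geometry apply in this infinite-dimensional setting (the polar space remains thick, classical, and of rank $p\geq 2$), and that the continuity of $\phi$ transfers to continuity of the field automorphism $\sigma$. The remaining ingredients are infinite-dimensional reformulations of arguments that are standard in finite dimension.
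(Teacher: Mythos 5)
Your proof follows essentially the same route as the paper: reduce to $p\geq 2$, use Lemma~\ref{polar_aut} to pass to an automorphism of the polar space $\mathcal{S}$, and invoke Tits' classification \cite[Theorem 8.6.II]{MR0470099} to land in $\PGO_\K(p,\kappa)$. You simply make explicit two points the paper leaves implicit or defers to the subsequent remark — the faithfulness of the boundary action (no nontrivial Clifford translations) and the reduction from $\Aut(\K)$ to $\Aut_c(\K)$, which the paper handles by a purely algebraic computation rather than your continuity argument — so this is a more detailed rendering of the same proof.
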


\begin{proof}As explained above, this statement is not new in case $p=1$. So let us assume that $p\geq2$. Any isometry induces an automorphism of the associated polar space by Lemma~\ref{polar_aut}. By \cite[Theorem 8.6.II]{MR0470099}, since the dimension of $\mathcal{H}$ is at least 4, the group of all this automorphisms is  $\PGO_\K(p,\kappa).$\end{proof}

\begin{rem} The definition of $\PGO_\K(p,\kappa)$ in \cite[\S8.2.8]{MR0470099} is not exactly the same as the one here but it is easy to see that  they coincide. Let $f$ be some semilinear map associated to $\sigma\in\Aut(\C)$ such that $f^*Q=\alpha Q$ for some $\alpha\in\R^\ast$. Tits defined $\PGO_\C(p,\kappa)$ as a quotient of such semilinear maps. We claim that for such $f$, necessarily $\sigma\in\Aut_c(\C)$ thus one recovers the definition here. Let $x\in\mathcal{H}$ such that $Q(x)>0$. For any $\lambda\in\C$, $f^*Q(\lambda x)=\sigma^{-1}\left(|\sigma(\lambda)|^2 Q(f(x))\right)$. So,

$$|\sigma(\lambda)|^2Q(f(x))=\sigma\left(\alpha Q(x)\right)\sigma(|\lambda|^2).$$

For $\lambda=1$, we get that $Q(f(x))=\sigma\left(\alpha Q(x)\right)$ and thus $|\sigma(\lambda)|^2=\sigma(|\lambda|^2)$ which implies that $\sigma$ preserves $\R$ and thus $\sigma\in\Aut_c(\C)$.
\end{rem}
\subsection{Topological simplicity}\label{sec_top_simp}
Let us endow $\Isom(\calX_\KK(p,\kappa))$ with the \emph{topology of pointwise convergence}, that is the topology associated to the uniform structure given by the \'ecarts $(g,h)\mapsto d(gx,hx)$ for any $x\in\calX_\KK(p,\kappa)$. When $\kappa\leq\infty$, this topology on $\Isom(\calX_\K(p,\kappa))$ is Polish \cite[\S 9.B]{MR1321597}.
The group $\PO_\K(p,\infty)$ is a closed subgroup and thus a non-locally compact Polish group. 

\begin{lem}The group $\PO_\K(p,\kappa)$ is a closed subgroup of $\Isom\left(\calX_\K(p,\kappa)\right)$.
\end{lem}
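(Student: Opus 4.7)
My plan: by Theorem~\ref{isomgroup}, $\Isom(\calX_\K(p,\kappa))=\PGO_\K(p,\kappa)$, and for $\K=\R$ or $\H$ this is exactly $\PO_\K(p,\kappa)$, so there is nothing to prove in those cases. The only substantive case is $\K=\C$, where
$$\PGO_\C(p,\kappa)=\PO_\C(p,\kappa)\rtimes\langle\tau\rangle$$
with $\tau$ the antiholomorphic involution induced by complex conjugation on the chosen basis. Since $\PO_\C(p,\kappa)$ has index two in $\PGO_\C(p,\kappa)$, it is enough to exhibit a continuous function on $\PGO_\C(p,\kappa)$ that distinguishes the two cosets.

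To produce such a function I would use the symplectic area with respect to the Kähler form $\omega$ of the Hermitian symmetric space $\calX_\C(p,\kappa)$. A holomorphic isometry $g$ satisfies $g^{\ast}\omega=\omega$, while an antiholomorphic one satisfies $g^{\ast}\omega=-\omega$. Consequently, for any three points $y_1,y_2,y_3\in\calX_\C(p,\kappa)$, the symplectic area
$$A(y_1,y_2,y_3):=\int_{\Delta(y_1,y_2,y_3)}\omega$$
of a geodesic triangle with the given ordered vertices is preserved by elements of $\PO_\C(p,\kappa)$ and negated by those of $\tau\cdot\PO_\C(p,\kappa)$. The value of $A$ does not depend on the choice of filling, since $\omega$ is closed and $\calX_\C(p,\kappa)$ is simply connected.

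I would then fix three points $x_1,x_2,x_3$ inside a totally geodesic complex hyperbolic line in $\calX_\C(p,\kappa)$ (a Poincar\'e disk), chosen so that $A(x_1,x_2,x_3)\neq 0$. The map
$$\Phi\colon\PGO_\C(p,\kappa)\longrightarrow\R,\qquad \Phi(g):=A(gx_1,gx_2,gx_3),$$
is continuous for the pointwise-convergence topology (the maps $g\mapsto gx_i$ are continuous by the very definition of that topology, and $A$ is smooth on $\calX_\C(p,\kappa)^{3}$), takes the value $A(x_1,x_2,x_3)$ on $\PO_\C(p,\kappa)$, and takes the value $-A(x_1,x_2,x_3)$ on the antiholomorphic coset. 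Hence $\PO_\C(p,\kappa)=\Phi^{-1}(A(x_1,x_2,x_3))$ is clopen in $\PGO_\C(p,\kappa)$, which is what we want.

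The one technical point to secure is the continuity of the area function $A$ on $\calX_\C(p,\kappa)^{3}$. This is where I would invoke \cite[Proposition 2.6]{MR3044451}: any finite configuration of points of $\calX_\C(p,\kappa)$ lies in a finite-dimensional totally geodesic K\"ahler submanifold. Applied to the varying configuration $(gx_1,gx_2,gx_3)$ along a net $g_\alpha\to g$, this reduces the continuity of $\Phi$ to the classical fact that the symplectic area of a geodesic triangle in a finite-dimensional Hermitian symmetric space of non-compact type is a smooth function of its vertices.
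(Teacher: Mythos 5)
Your proof is correct and follows essentially the same route as the paper: the real and quaternionic cases are trivial, and in the complex case the paper also uses the Kähler-form area cocycle $c(x,y,z)=\int_{\Delta(x,y,z)}\omega$, which is $\PO_\C(p,\kappa)$-invariant and negated by antiholomorphic isometries, to realize $\PO_\C(p,\kappa)$ as the preimage of a point under a continuous map. The only difference is cosmetic: the paper cites \cite[\S5.2]{duchesne2018boundary} for continuity of the cocycle, whereas you supply the reduction to finite-dimensional totally geodesic submanifolds yourself.
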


\begin{proof} In the real and quaternionic cases, there is nothing to prove since the two groups coincide. 

In the complex case, the symmetric space $\calX_\C(p,\kappa)$ is Hermitian and there is a Kähler form $\omega$. For three points $x,y,z$, one can define 

$$c(x,y,z)=\int_{\Delta_{(x,y,z)}}\omega.$$

This is a continuous $\PO_\CC(p,\kappa)$-invariant cocycle \cite[\S5.2]{duchesne2018boundary}. If $g$ is an anti-holomorphic isometry (i.e. a $\sigma$-semilinear isometry where $\sigma$ is the complex conjugation) then $c(gx,gy,gz)=-c(x,y,z)$. Fix $x,y,z$ such that $c(x,y,z)\neq0$. Since the map $c\colon \calX_\C(p,\kappa)^3\to\R$ is continuous then $\PO_\CC(p,\kappa)$ is the preimage of $\{1\}$ under the continuous map $g\mapsto \frac{c(gx,gy,gz)}{c(x,y,z)}$ and thus closed.\end{proof}

In the remaining of this section we prove $\PO_\K(p,\kappa)$ is topologically simple for any cardinal $\kappa$. For a closed non-degenerate subspace $E$, $\mathcal{H}=E\oplus E^\bot$ and the orthogonal group of the restriction of $Q$ on $E$ can be embedded in $\OO_\K(p,\kappa)$ by letting it act trivially on $E^\bot$. We denote by $\OO_\K(E)$ its image in $\OO_\K(p,\kappa)$ and by $\PO_\K(E)$ the corresponding subgroup of $\PO_\K(p,\kappa)$

\begin{prop}\label{OE} Let $g_1,\dots,g_n$ be a finite collection of elements in $\Ofr(p,\kappa)$ then there is a non-degenerate finite dimensional subspace $E\leq\mathcal{H}$ of index $p$ such  that $g_i\in\OO_\K(E)$ for all $i\leq n$.
\end{prop}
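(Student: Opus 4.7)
The plan is to find a finite-dimensional subspace that already absorbs the action of every $g_i$, then enlarge it so that it is non-degenerate and of positive index exactly $p$.

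First I would write each $g_i = \Id + A_i$ with $A_i$ of finite rank, set $W_i := \ker A_i$ (a closed subspace of finite codimension), and verify that $W_i^{\perp} = A_i(\mathcal{H})$. This identity uses the isometry relation: $g_i^{-1}$ is again a finite-rank perturbation of the identity with $\ker(g_i^{-1} - \Id) = W_i$, and taking formal adjoints via $g_i^{\ast} = g_i^{-1}$ identifies $A_i(\mathcal{H})^{\perp} = \ker A_i^{\ast}$ with $W_i$. Now set $V := \sum_{i=1}^n A_i(\mathcal{H})$; this is finite-dimensional and $V = \bigl(\bigcap_i W_i\bigr)^{\perp}$. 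The key remark is that for any non-degenerate closed subspace $E \supseteq V$, one automatically has $g_i \in \OO_\K(E)$: the inclusion $E^{\perp} \subseteq V^{\perp} = \bigcap_i W_i$ forces $g_i$ to be the identity on $E^{\perp}$, while $g_i(e) = e + A_i(e) \in E + V \subseteq E$ shows that $E$ is $g_i$-invariant. So the problem reduces to producing a finite-dimensional, non-degenerate $E \supseteq V$ of positive index $p$.

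Next I would enlarge $V$ to a non-degenerate $E_0$. The radical $N := V \cap V^{\perp}$ is finite-dimensional and totally isotropic; pick a basis $n_1, \dots, n_k$ of it. Since $\varphi$ is non-degenerate on $\mathcal{H}$, the linear functionals $\varphi(n_j, \cdot)$ on $\mathcal{H}$ are linearly independent, so one can find $w_1, \dots, w_k \in \mathcal{H}$ satisfying $\varphi(n_j, w_l) = \delta_{jl}$. A direct computation then shows that $E_0 := V + \Span(w_1, \dots, w_k)$ is non-degenerate. Finally, $E_0$ has some signature $(r, s)$ with $r \leq p$ and $s$ finite, so by additivity of signatures in the orthogonal decomposition $\mathcal{H} = E_0 \oplus E_0^{\perp}$ the complement $E_0^{\perp}$ has signature $(p - r, \kappa - s)$; I pick inside it a positive subspace $P'$ of dimension $p - r$ and set $E := E_0 \oplus P'$. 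Then $E$ is finite-dimensional and non-degenerate, has positive index $p$, contains $V$, and the first paragraph gives $g_i \in \OO_\K(E)$ for every $i$.

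The main obstacle is really only the second step: to inflate $V$ to a non-degenerate $E_0$ one needs to pair off its isotropic radical inside $\mathcal{H}$, and this is where the strong non-degeneracy of $\varphi$ is used (through the linear independence of the functionals $\varphi(n_j, \cdot)$). Everything else is bookkeeping with signatures and the direct check that the $g_i$ respect the final $E$.
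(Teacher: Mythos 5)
Your proof is correct and follows essentially the same strategy as the paper's: enclose the finite-dimensional images of the $A_i$ in a non-degenerate finite-dimensional subspace $E$ of index $p$ and check that each $g_i$ preserves $E$ and acts trivially on $E^{\perp}$. The only differences are cosmetic: you obtain triviality on $E^{\perp}$ from the adjoint identity $A_i(\mathcal{H})^{\perp}=\ker A_i$, where the paper deduces $A_ix\in E\cap E^{\perp}=0$ from the $g_i$-invariance of $E^{\perp}$, and you spell out the enlargement of $V$ to a non-degenerate subspace of index $p$, a step the paper takes for granted.
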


\begin{proof} We first prove the result for one element $g\in\Ofr(p,\kappa)$. By definition, there is an operator $A$ of finite rank such that $g=\Id+A$. Let $F$ be the image of $A$ and choose $E$ to be any finite dimensional non-degenerate subspace of $\mathcal{H}$ of index $p$ that contains $F$. Observe that any subspace of $\mathcal{H}$ is $g$-invariant if and only if it is $A$-invariant. Since for any $x\in E$, $Ax\in F\subset E$, $E$ is $g$-invariant. Since $E$ is non-degenerate, $\mathcal{H}=E\oplus E^\bot$ is a $g$-invariant splitting. We claim that the restriction of $A$ to $E^\bot$ is trivial. Actually, for $x\in E^\bot$, $Ax\in E^\bot$ (because $E^\bot$ is $g$-invariant) and $Ax\in F\subset E$. Thus $Ax=0$. So $g\in \OO_\K(E)$.

Now, for each $i$, we find $E_i$ non-degenerate finite dimensional subspace of index $p$ such that $g_i\in\OO_\K({E_i})$. The sum $E=\sum_i E_i$ is non-degenerate finite dimensional subspace of index $p$. Since $\OO_\K({E_i})\le\OO_\K(E)$, the result follows.
\end{proof}

There is no determinant for infinite dimensional operators in general but if $\K=\R$ or $\C$ and $g\in\OO_\K^\mathfrak{fr}(p,\kappa)$, one can find $E$ as in Lemma~\ref{OE} and define $\det(g)=\det(g_E)$ which does not depend on the choice of $E$. As in finite dimension, this defines a group homomorphism $\OO_\K^\mathfrak{fr}(p,\kappa)\to\{\pm1\}$ or $\mathbf{S}^1$. We define $\SO^\mathfrak{fr}_\K(p,\kappa)$ to be the subgroup of elements with determinant 1. In  the quaternionic case, there is also a notion of determinant \cite{MR0012273} but it  takes positive real values and thus it is constant to 1 on $\OO_\K(p,q)$ for $p,q\in\NN$. 

So, we use  $\SO^\mathfrak{fr}_\H(p,\kappa)=\OO^\mathfrak{fr}_\H(p,\kappa)$ to have a uniform notation. We denote by $\PSO_\K^{\mathfrak{fr}}(p,\kappa)$ the image of $\SO_\K^{\mathfrak{fr}}(p,\kappa)$ in $\PO_\K^{\mathfrak{fr}}(p,\kappa)$. Let us observe that this image is isomorphic to $\SO_\K^{\mathfrak{fr}}(p,\kappa)$ since non-trivial homotheties are never finite rank perturbations of the identity.\\

The real case is bit particular since the Lie group $\SO_\R(E)$ has two connected components. If $P$ is a definite positive subspace of dimension $p$ in $E$ and let $\pi_P$ be the orthogonal projection to $P$. The map 
$$\begin{array}{rcl}
\SO_\R(E)&\to&\{\pm1\}\\
g&\mapsto& \frac{\det\left((\pi_P\circ g)|_P\right)}{\left|\det\left((\pi_P\circ g)|_P\right)\right|}
\end{array}$$ 

is a group homomorphism and its kernel is exactly the connected component of the identity in $\SO_\R(E)$, which is called the \emph{orthochronous group}\footnote{The name comes from the Lorentz group in special relativity. The orthochronous subgroup is the subgroup that preserves the direction of time.} denoted $\SO_\R^o(E)$. This homomorphism does not depend on the choice of $P$ and actually extends to the whole of $\OO_\R(p,\kappa)$. 

\begin{lem}The orientation map 

$$\begin{array}{rcl}
\mathop{or}\colon\OO_\R(p,\kappa)&\to&\{\pm1\}\\
g&\mapsto& \frac{\det\left((\pi_P\circ g)|_P\right)}{\left|\det\left((\pi_P\circ g)|_P\right)\right|}
\end{array}$$ 
is a well-defined surjective homomorphism.
\end{lem}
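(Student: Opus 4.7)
The statement has four components: (a)~$\det\bigl((\pi_P\circ g)|_P\bigr)\neq 0$, so the sign is defined; (b) this sign is independent of $P$; (c) surjectivity; (d) the map is a group homomorphism.

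For (a), I would show that $(\pi_P\circ g)|_P\colon P\to P$ is injective. If $x\in P$ satisfies $gx\in P^\bot$, then $Q(x)=Q(gx)\le 0$ since $P^\bot$ is negative definite, while $Q(x)\ge 0$ since $x\in P$; thus $Q(x)=0$ and positive-definiteness forces $x=0$. Finite-dimensionality of $P$ then yields invertibility. For (b), $P\mapsto\det\bigl((\pi_P\circ g)|_P\bigr)$ is a continuous $\R^\ast$-valued function on the connected space $\calX_\R(p,\kappa)$ of positive definite $p$-planes, so its sign is constant. For (c), the identity has sign $+1$, whereas the reflection of $\mathcal H$ about the $Q$-orthogonal hyperplane of any unit vector in $P$ lies in $\OO_\R(p,\kappa)$ and has sign $-1$.

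The substantive step is (d), since the block-matrix identity $A_{gh}=A_gA_h+B_gC_h$ is not visibly multiplicative. I would argue geometrically via the real line bundle $\mathcal L\to\calX_\R(p,\kappa)$ with fiber $\mathcal L_V=\Lambda^p V$. The positivity argument of (a) shows that each $\pi_P|_V\colon V\to P$ is an isomorphism, so one defines a continuous nowhere-vanishing section $V\mapsto\omega_V\in\Lambda^p V$ by requiring $\Lambda^p\pi_P(\omega_V)=\omega_P$. The natural action $g\cdot\omega:=\Lambda^p g(\omega)\in\mathcal L_{gV}$ yields scalars $\lambda(g,V)\in\R^\ast$ through $\Lambda^p g(\omega_V)=\lambda(g,V)\,\omega_{gV}$; continuity of $\lambda(g,\cdot)$ together with connectedness of $\calX_\R(p,\kappa)$ forces $\varepsilon(g):=\mathrm{sign}\,\lambda(g,V)$ to be independent of $V$. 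Multiplicativity then follows from the chain
$$\lambda(gh,V)\,\omega_{ghV}\;=\;\Lambda^p g\bigl(\Lambda^p h(\omega_V)\bigr)\;=\;\lambda(h,V)\,\Lambda^p g(\omega_{hV})\;=\;\lambda(h,V)\lambda(g,hV)\,\omega_{ghV},$$
so $\varepsilon(gh)=\varepsilon(g)\varepsilon(h)$. Applying $\Lambda^p\pi_P$ to the defining equation at $V=P$ gives $\lambda(g,P)=\det\bigl((\pi_P\circ g)|_P\bigr)$, hence $\varepsilon=\mathop{or}$ as $\{\pm 1\}$-valued maps.

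The main obstacle is exactly the homomorphism property. The algebraic route through the block decomposition produces a correction factor $\det\bigl(I+A_h^{-1}A_g^{-1}B_gC_h\bigr)$ whose positivity is opaque, whereas the orientation-bundle reformulation reduces multiplicativity to the functoriality of $\Lambda^p(gh)=\Lambda^pg\circ\Lambda^ph$ applied to a single globally defined section, and uses only connectedness of the moduli space $\calX_\R(p,\kappa)$ to ensure the sign is an invariant of $g$.
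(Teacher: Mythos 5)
Your proof is correct and follows essentially the same route as the paper: both pass to the $p$-th exterior power of positive definite $p$-planes and use connectedness of $\calX_\R(p,\kappa)$ to show that $g$ either preserves or swaps the two orientation classes, which is exactly the homomorphism property, with the determinant formula recovered by evaluating at the reference plane $P$. The only difference is packaging: the paper compares the orientations of two planes directly via the induced form $\varphi$ on $\bigwedge^p\mathcal{H}$ (nonvanishing because $\varphi(i(x),i(y))=\prod\cosh(\lambda_i)$), whereas you compare each plane to the fixed $P$ via $\Lambda^p\pi_P$ --- the same positivity fact in a different guise.
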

\begin{proof} Let us denote by $\mathcal{B}_+=\{x=(x_1,\dots,x_p)\in \mathcal{H}^p,\ \Span(x)\in\calX_\R(p,\kappa)\}$. Let $\bigwedge^p\mathcal{H}$ the $p$-th exterior power of $\mathcal{H}$ and for $g\in\GL(\mathcal{H})$, we denote by $\wedge^pg$ the linear operator defined by $\wedge^pg(x_1\wedge\dots\wedge x_p=g(x_1)\wedge\dots\wedge g(x_p)$. The symmetric form $\varphi$ on $\mathcal{H}$ induces a symmetric form on $\bigwedge^p\mathcal{H}$   denoted $\varphi$ as well and defined by 
$$ \varphi(x_1\wedge\dots\wedge x_p,y_1\wedge\dots\wedge y_p)=\det\left(\varphi(x_i,y_j)\right).$$ 

There is an $\OO_\R(p,\kappa)$-equivariant continuous map $i\colon \mathcal{B}_+\to\bigwedge^p\mathcal{H}$ defined by $i(x)=x_1\wedge\dots\wedge x_p$ where $x=(x_1,\dots,x_p)$. Let us observe that $i(x)$ and $i(y)$ are proportional if and only if $\Span(x)=\Span(y)$ and they are moreover positively proportional if the bases $x$ and $y$ are in the same orientation class.

For $x,y\in\mathcal{B}_+$, $\varphi(i(x),i(y))\neq0$ since there exist orthonormal bases $x’,y’$ of $\Span(x)$ and $\Span(y)$ such that $\varphi(x’_i,y’_i)=\cosh(\lambda_i)$ where $(\lambda_i)$ is the collection of hyperbolic principal angles between $\Span(x)$ and $\Span(y)$ and  $\varphi(x’_i,y’_j)=0$ for $i\neq j$. So, $\varphi(i(x’),i(y’))=\prod\cosh(\lambda_i)\neq0$. Moreover for $x,y,z\in\mathcal{B}_+$ if $\varphi(i(x),i(y))>0$ and $\varphi(i(x),i(z))>0$ then $\varphi(i(y),i(z))>0$. This follows from the fact that there is a continuous path $t\mapsto x_t$ from $[0,1]$ to $\mathcal{B}_+$ such that $x_0=x$ and $x_1$ is a basis of $\Span(y)$ in the orientation class of $y$ (using the exponential matrix that appears in \cite[Proposition 3.5]{MR3044451} which maps $\Span(y)$ to $\Span(x)$ and multiplying the hyperbolic angles $\lambda_i$ by $(1-t)$). In particular, one can define an equivalence relation $\sim$ on $\mathcal{B}_+$ such that $x\sim y\iff \varphi(i(x),i(z))>0$. This relation is invariant under the induced action of $\OO_\R(p,\kappa)$ (i.e. $gx\sim gy\iff x\sim y$). In particular, an element $g\in\OO_\R(p,\kappa)$ preserves each of the two equivalent classes or permute them. It preserves these two classes if and only if for any $x\in\mathcal{B}_+$, $\varphi(i(gx),x)>0$. 

If $x$ is a basis of $P$ then $\varphi(i(gx),x)$ and $\det\left((\pi_P\circ g)|_P\right)$ have the same sign. This yields the homomorphism $\OO_\R(p,\kappa)\to\ZZ/2\ZZ$.  The surjectivity is obtained by choosing an element of $\OO_\R(P)\setminus\SO_\R(P)$ and extending it trivially on $P^\bot$. 
\end{proof}

We denote the kernel of $\mathop{or}$ by $\OO^o_\R(p,\kappa)$ and called it the \emph{orthochronous group} as well. 
\begin{rem}Let us observe that if $p$ is odd then $-\Id\notin\OO^o_\R(p,\kappa)$ and in this case,  $\PO_\R^o(p,\kappa)=\PO_\R(p,\kappa)$. On the contrary, if $p$ is even then $-\Id\in\OO^o_\R(p,\kappa)$ and the orientation map $\mathop{or}$ defines a non-trivial homomorphism on the quotient group $\PO_\R(p,\kappa)$. Thus $\PO^o_\R(p,\kappa)$ is a strict non-trivial normal subgroup of $\PO_\R(p,\kappa)$.
\end{rem}
%

\begin{lem}\label{frdense} For any finite collection $\{x_i\}_{i\in\{1,\dots,n\}}$ of points in $ \mathcal{X}_\K(p,\kappa)$ and $g\in\PO_\K(p,\kappa)$, there is $g_0\in \PSO_\K^{\mathfrak{fr}}(p,\kappa)$ such that $gx_i=g_0x_i$ for any $i\in\{1,\dots,n\}$.\end{lem}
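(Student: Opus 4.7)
The strategy is to reduce the problem to a finite-dimensional Witt extension question. Pick a lift $\tilde g \in \OO_\K(p,\kappa)$ of $g$. Let $V = x_1 + \cdots + x_n$, a finite-dimensional subspace of $\mathcal{H}$ (possibly degenerate and of mixed signature, but of positive index at most $p$), and let $V' = \tilde g(V) = \tilde g(x_1) + \cdots + \tilde g(x_n)$. Set $W = V + V'$, which is still finite-dimensional. The plan is to produce an isometry $g_0$ of $\mathcal{H}$ that acts as $\tilde g$ on $V$ (pointwise) and as the identity on a complement of some finite-dimensional space containing $W$; then on each subspace $x_i \subset V$ we will have $g_0(x_i) = \tilde g(x_i) = g \cdot x_i$ as required.

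Next, I would choose a finite-dimensional non-degenerate subspace $E \leq \mathcal{H}$ of signature $(p,m)$ for some $m \geq 1$, with $E \supset W$ and $E \neq W$ (so that $W^\bot \cap E$ contains a non-degenerate line; this uses $\kappa \geq p$ to enlarge $E$ freely in the negative part). The restriction $\tilde g|_V \colon V \to V'$ is an isometry between subspaces of the non-degenerate Hermitian space $E$, so Witt's extension theorem (valid for $\K = \R, \C, \H$) produces $g_E \in \OO_\K(E)$ with $g_E|_V = \tilde g|_V$. Defining $g_0 = g_E \oplus \Id_{E^\bot}$ yields an element of $\OO_\K^{\mathfrak{fr}}(p,\kappa)$ (it is $\Id + (g_E - \Id_E)$ with finite-rank difference), and since each $x_i$ lies in $V$, we have $g_0(x_i) = g_E(x_i) = \tilde g(x_i)$ as subspaces, so the image in $\PO_\K(p,\kappa)$ sends $x_i$ to $g \cdot x_i$.

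It remains to bring $g_0$ into $\PSO_\K^{\mathfrak{fr}}(p,\kappa)$. In the quaternionic case this is automatic since $\SO_\H^{\mathfrak{fr}} = \OO_\H^{\mathfrak{fr}}$. In the real and complex cases, pick a non-degenerate line $L \subset W^\bot \cap E$, which exists by our choice of $E$. Left-multiplication of $g_0$ by an isometry $h$ supported on $L$ (i.e., acting by some unit scalar on $L$ and by $\Id$ on $L^\bot$) changes $\det(g_0)$ by this scalar while leaving $g_0|_W$ unchanged, since $W \subseteq L^\bot$. Choosing the scalar to be $\det(g_0)^{-1} \in \{\pm 1\}$ (real) or $\in \mathbf{S}^1$ (complex) produces the desired element of $\SO_\K^{\mathfrak{fr}}(p,\kappa)$, and its class in $\PSO_\K^{\mathfrak{fr}}(p,\kappa)$ satisfies the conclusion.

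The main subtlety is that $V$ may be degenerate inside $\mathcal{H}$: two of the positive-definite subspaces $x_i$ can combine to produce null or even negative vectors in $V$. This is handled precisely by invoking Witt's theorem, which extends \emph{any} isometry between (even degenerate) subspaces of a non-degenerate ambient space; one must just make sure to choose $E$ non-degenerate and slightly bigger than $W$ so that the determinant correction has room to live outside $W$.
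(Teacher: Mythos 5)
Your argument is correct and follows essentially the same route as the paper: extend the partial isometry via Witt's theorem to an element of $\OO_\K(E)$ for a finite-dimensional non-degenerate $E$, extend by the identity on $E^\bot$ to land in $\Ofr(p,\kappa)$, then repair the determinant by a unit scalar acting on a spare anisotropic line orthogonal to everything relevant. The only cosmetic difference is that you apply Witt directly to the possibly degenerate span $V=x_1+\dots+x_n$, whereas the paper first replaces it by the minimal non-degenerate subspace containing the $x_i$ and then applies Witt to $E\to g(E)$ inside $E+g(E)$; both work, and your remark about enlarging $E$ freely in the negative part is exactly what is needed to guarantee that the correcting line $L$ exists (the bare condition $E\neq W$ alone would not quite suffice when $W$ is degenerate).
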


\begin{proof} Let $E$ be the minimal non-degenerate subspace of $\mathcal{H}$ such that $x_i\subseteq E$ for all $i\in[1,n]$ \cite[Lemma 3.11]{duchesne2018boundary}. Let us set $F=E+g(E)$ which is a non-degenerate subspace and thus $\mathcal{H}=F\oplus F^\bot$. By Witt theorem, there is $h\in\PO(F)$ such that $h^{-1}gE=E$. In particular $h\in \PO_\K^{\mathfrak{fr}}(p,\kappa)$. The element $h^{-1}g$ preserves $E$ and thus its orthogonal $E^\bot$. Let us define $f$ to be the restriction $h^{-1}g$ on $E$ and being trivial on $E^\bot$. Let $g_0=hf\in\PO_\K^{\mathfrak{fr}}(p,\kappa)$. Since for all $i\in[1,n]$ $x_i \subset E$, $g_0x_i=h(h^{-1}g)x_i=gx_i$.
In case $\K=\R$ or $\C$, one can moreover choose a (necessarily negative) line $L$ in $F^\bot$ and modify $g_0$ to act by $\det(g_0)^{-1}$ on this line and being trivial on $\left(F+L\right)^\bot$. In this way, one has moreover $g_0\in\SO_\K^\mathfrak{fr}(p,\kappa)$.
\end{proof}

\begin{prop}\label{dense} The group $\PSOfr(p,\kappa)$ is simple and dense in $\PO_\K(p,\kappa)$ when $\K=\C$ or $\H$. In the real case, $\PSO^{\mathfrak{fr}}_\R(p,\kappa)\cap\PO^o_\R(p,\kappa)$ is simple and dense in $\PO^o_\R(p,\kappa)$\end{prop}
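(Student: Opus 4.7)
The plan is to prove density and simplicity separately.

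\textbf{Density.} The topology on $\PO_\K(p,\kappa)$ is pointwise convergence on $\calX_\K(p,\kappa)$, so a basic neighborhood of $g \in \PO_\K(p,\kappa)$ has the form $\{h : d(h x_i, g x_i) < \varepsilon, \ i = 1, \ldots, n\}$ for some finite collection $x_1, \ldots, x_n \in \calX_\K(p,\kappa)$. Lemma~\ref{frdense} directly provides $g_0 \in \PSOfr(p,\kappa)$ with $g_0 x_i = g x_i$ for all $i$, which lies in this neighborhood; this handles the cases $\K = \C$ and $\K = \H$. For $\K = \R$, inspection of the construction in Lemma~\ref{frdense} shows that $g_0$ and $g$ agree pointwise on the minimal non-degenerate subspace $E$ containing the $x_i$, since $g_0|_E = h(h^{-1}g)|_E = g|_E$ and the determinant-adjustment is supported on $F^\bot \subset E^\bot$. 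Choosing a positive $p$-dimensional subspace $P \subset E$ to compute $\mathop{or}$ then gives $\mathop{or}(g_0) = \mathop{or}(g)$, so $g_0 \in \PO^o_\R(p,\kappa)$ whenever $g \in \PO^o_\R(p,\kappa)$.

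\textbf{Simplicity.} By Proposition~\ref{OE}, $\PSOfr(p,\kappa) = \bigcup_E \PSO_\K(E)$ is a directed union over finite-dimensional non-degenerate index-$p$ subspaces $E \leq \mathcal{H}$. In the real case, intersecting with $\PO^o_\R(p,\kappa)$ yields $\bigcup_E \PSO^o_\R(E)$, since extending by the identity on $E^\bot$ preserves orthochrony (by the same projection-onto-$P \subset E$ argument as above). Let $N$ be a non-trivial normal subgroup of $\PSOfr(p,\kappa)$ and pick $1 \neq g \in N$, which lies in some $\PSO_\K(E_0)$. Enlarge $E_0$ to $E \supset E_0$ so that the finite-dimensional Lie group $\PSO_\K(E)$ (respectively $\PSO^o_\R(E)$) is simple. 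Then $N \cap \PSO_\K(E)$ is a non-trivial normal subgroup of this simple group and therefore equals it. The same argument applied to arbitrary $E' \supset E$ gives $N \supset \PSO_\K(E')$, and passing to the directed union yields $N = \PSOfr(p,\kappa)$.

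The main obstacle is securing simplicity of the finite-dimensional factors. Over $\C$ and $\H$, the groups $\PSO_\C(p,q)$ and $\PSO_\H(p,q)$ are simple for all relevant dimensions ($p + q \geq 2$ and $p + q \geq 1$ respectively), so no difficulty arises. Over $\R$, $\PSO^o_\R(p,q)$ can fail to be simple in small signatures, the notorious case being $(p,q) = (2,2)$; this is avoided by always enlarging $E$ so that $\dim E - p \geq 3$, which is possible whenever $\kappa \geq 3$ and in particular for the main case $\kappa = \infty$.
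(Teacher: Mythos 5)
Your proof is correct and follows essentially the same route as the paper's: density is read off from Lemma~\ref{frdense} together with the pointwise-convergence topology, and simplicity comes from writing $\PSOfr(p,\kappa)$ as a directed union of the finite-dimensional simple groups $\PSO_\K(E)$ (resp.\ $\PSO^o_\R(E)$). Your extra care in enlarging $E$ so that $\dim E - p \geq 3$ is a worthwhile refinement, since the paper's blanket claim that $\PSO^o_\R(p,q)$ is simple fails for small signatures such as $(2,2)$; otherwise the two arguments coincide.
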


\begin{proof} It is well known that when $q$ is finite, the group $\PSO_\KK(p,q)$ or $\PSO_\R^o(p,q)$, is a connected simple Lie group with trivial center and thus is abstractly simple. Let $g\in\PSOfr(p,\kappa)$ (assumed to be moreover orthochronous in the real case), there is $E$ such that $g\in\PSO_\K(E)$. If $h\in\PSOfr(p,\kappa)$ is another element (also assumed to be orthochronous in the real case), we may moreover enlarge $E$ such that $g,h\in\PSO_\K(E)$. If $g$ is not trivial then the normal subgroup of $\PSO_\K(E)$ (respecticvely $\PSO_\R^o(E)$) generated by $g$ is  $\PSO_\K(E)$ (respecticvely $\PSO_\R^o(E)$) itself  and thus contains $h$. So the simplicity  statement follows.

Density of $\PSO^{\mathfrak{fr}}_\K(p,\kappa)$ in $\PO_\K(p,\kappa)$ is  a straightforward corollary of Lemma~\ref{frdense}. In case $\K=\R$ and $g$ is orthochronous then $g_0$ (from Lemma~\ref{frdense}) is automatically orthochronous as well.
\end{proof}

\begin{rem}For any ideal $\mathcal{I}$ of the space of bounded operators (like the finite rank operators, the compact operators or the $q$-th Schatten class operators for $q\geq1$) one can construct a normal subgroup of elements of $\OO_\K(p,\kappa)$ that are perturbations of the identity where the perturbation lies in $\mathcal{I}$, i.e. elements of the form $I+A$ where $A\in\mathcal{I}$. So, $\OO_\K(p,\kappa)$ has a lot of normal subgroups and $\PO_\K(p,\kappa)$ is not simple. 
\end{rem}

For a subgroup $G'\leq G$, we denote by $\mathcal{Z}_G(G')$ its centralizer in $G$. A topological group is said to be \emph{topologically simple} if any non-trivial normal subgroup is dense. In order to show that $\PO_\K(p,\kappa)$ is topologically simple, let us show two easy facts from group theory.

\begin{lem}\label{normalsb} Let $G$ be a group and $N$ a normal simple subgroup. If $H\le G$ is a normal subgroup, either $N\leq H$ or $H\le \mathcal{Z}_G(N)$.
\end{lem}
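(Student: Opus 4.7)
The plan is the classical dichotomy argument: look at the intersection $N\cap H$ and use simplicity of $N$ to force one of the two alternatives.

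First I would observe that $N\cap H$ is a normal subgroup of $G$ (as an intersection of two normal subgroups), and in particular it is a normal subgroup of $N$. Since $N$ is simple, this leaves exactly two possibilities: either $N\cap H=N$, which immediately gives $N\leq H$, or $N\cap H=\{e\}$.

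In the second case, the task is to show $H\leq\mathcal{Z}_G(N)$, and the standard commutator trick does the job. For arbitrary $n\in N$ and $h\in H$, I would consider $[n,h]=nhn\inv h\inv$ and rewrite it in two ways: as $(nhn\inv)h\inv$, which lies in $H$ since $H$ is normal; and as $n(hn\inv h\inv)$, which lies in $N$ since $N$ is normal. Thus $[n,h]\in N\cap H=\{e\}$, so $n$ and $h$ commute. As this holds for every $n\in N$ and $h\in H$, we conclude $H\leq\mathcal{Z}_G(N)$.

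There is no real obstacle here; the only thing to be slightly careful about is the simplicity hypothesis, which is used exactly once to rule out any intermediate possibility for $N\cap H$. The whole argument is a couple of lines and does not require any structure from the orthogonal group context.
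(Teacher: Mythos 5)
Your proof is correct and uses essentially the same ingredients as the paper's: the commutator $[n,h]\in N\cap H$ together with simplicity of $N$ applied to the normal subgroup $N\cap H$. The paper merely phrases it contrapositively (assuming $H\nleq \mathcal{Z}_G(N)$ to get a nontrivial element of $N\cap H$) rather than splitting into cases on $N\cap H$ first, but the argument is the same.
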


\begin{proof} Let $n\in N$ and $h\in H$ then $[n,h]\in N\cap H$. So if $H\nleq \mathcal{Z}_G(N)$ then there are $n$ and $h$ such that $[n,h]\neq e$ thus $N\cap H$ is non trivial normal subgroup of $N$ and thus $N=N\cap H\leq H$.
\end{proof}

\begin{prop}\label{topsimp} Let $G$ be a Hausdorff topological group with trivial center. If there is a dense normal simple subgroup $N$ then $G$ is topologically simple.
\end{prop}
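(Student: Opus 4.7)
The plan is to take an arbitrary non-trivial normal subgroup $H\le G$ and show it must be dense. The key tool is Lemma~\ref{normalsb} applied to the simple normal subgroup $N$ and to $H$: this gives a dichotomy, either $N\le H$ or $H\le \mathcal{Z}_G(N)$.

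In the first branch there is nothing to do, since $N\le H\le G$ and $N$ is already dense in $G$, so $H$ is dense. The content of the proof is therefore the second branch, where we must rule out $H\le \mathcal{Z}_G(N)$ unless $H$ is trivial. Here I would argue that for each fixed $h\in H$ the centralizer
\[
\mathcal{Z}_G(h)=\{g\in G : ghg^{-1}h^{-1}=e\}
\]
is the preimage of $\{e\}$ under the continuous commutator map $g\mapsto [g,h]$, and is therefore closed because $G$ is Hausdorff. By assumption $N\subseteq \mathcal{Z}_G(h)$, so passing to closures gives $G=\overline{N}\subseteq \mathcal{Z}_G(h)$, i.e. $h\in \mathcal{Z}(G)$. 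Since $\mathcal{Z}(G)$ is trivial by hypothesis, $h=e$; as $h\in H$ was arbitrary, $H=\{e\}$, contradicting non-triviality. Hence only the first branch can occur, and every non-trivial normal subgroup of $G$ is dense, proving topological simplicity.

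The main obstacle — such as it is — is the passage from ``$H$ centralizes the dense subgroup $N$'' to ``$H$ centralizes all of $G$''. This is precisely where the Hausdorff hypothesis and the continuity of multiplication and inversion are used, through the closedness of each individual centralizer $\mathcal{Z}_G(h)$; the triviality of $\mathcal{Z}(G)$ then converts this into the desired contradiction. Everything else is a direct quotation of Lemma~\ref{normalsb} and the defining density of $N$.
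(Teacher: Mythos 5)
Your proof is correct and follows essentially the same route as the paper: the paper also invokes Lemma~\ref{normalsb} and, in the branch $H\le \mathcal{Z}_G(N)$, uses the continuity of $g\mapsto[g,h]$ together with density of $N$ and the Hausdorff hypothesis to conclude $h\in\mathcal{Z}(G)=\{e\}$. Your phrasing via the closedness of each centralizer $\mathcal{Z}_G(h)$ is just a mild repackaging of the same argument.
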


\begin{proof}Let $H$ be a non trivial normal subgroup. Assume $H\leq\mathcal{Z}_G(N)$. Let $h\in H$, the map $g\mapsto[g,h]$ from $G$ to $G$, is continuous and constant on $N$. Since $N$ is dense and $G$ is Hausdorff, this map is constant on $G$ and $h$ is in the center of $G$. Since $H$ is non-trivial and $G$ has trivial center, this is a contradiction. So by Lemma~\ref{normalsb}, $N\leq H$ and $H$ is dense.
\end{proof} 

 \begin{thm}\label{thmtopsimpk} Let $\kappa$ be some infinite cardinal. The topological groups $\PO_\C(p,\kappa),\PO_\H(p,\kappa)$ and $\PO_\K^o(p,\kappa)$ are topologically simple.
 \end{thm}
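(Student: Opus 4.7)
The plan is to apply Proposition~\ref{topsimp}. In each of the three cases, a dense simple normal subgroup is supplied by Proposition~\ref{dense}: take $\PSOfr(p,\kappa)$ when $\K=\C$ or $\H$, and $\PSOfr(p,\kappa)\cap\PO_\R^o(p,\kappa)$ in the real orthochronous case. Normality of these subgroups in the ambient group is immediate, since conjugation by an element of $\OO_\K(p,\kappa)$ preserves being a finite rank perturbation of the identity and preserves both the determinant and the orientation. The topology of pointwise convergence is Hausdorff on any isometry group. What remains is to verify that each ambient group has trivial center.

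The strategy for this is to exploit the faithful action on the symmetric space. Faithfulness of $\PO_\K(p,\kappa)\action\calX_\K(p,\kappa)$ is built into the definition: an element of $\OO_\K(p,\kappa)$ preserving every positive $p$-subspace must send each positive vector to a scalar multiple of itself, and comparing sums of positive vectors forces the scalar to be uniform, so the element lies in $\mathcal{Z}(\OO_\K(p,\kappa))$; the real orthochronous case follows by restriction. Now assume $\bar g$ is central in the ambient group and let $K_x$ denote the stabilizer of $x\in\calX_\K(p,\kappa)$; then $\bar g x$ is fixed by $K_x$, because $K_x\bar g x=\bar g K_x x=\{\bar g x\}$. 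It therefore suffices to show $\Fix(K_x)=\{x\}$ for every $x$, since then $\bar g$ fixes every point of $\calX_\K(p,\kappa)$ and faithfulness yields $\bar g=e$.

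The crux is thus $\Fix(K_x)=\{x\}$, which I would establish via the isotropy representation. Writing $x$ as a positive $p$-dimensional subspace $V$, the tangent space identifies with $\mathrm{Hom}_\K(V,V^\bot)$, and in every case the stabilizer contains the subgroup $\{1_V\}\times\OO_\K(V^\bot)$ — including in the real orthochronous case, where the stabilizer reduces to $\SO_\R(V)\times\OO_\R(V^\bot)$. This subgroup acts by $b\cdot T=bT$, and since $\OO_\K(V^\bot)$ has no nonzero fixed vector on $V^\bot$ (using the standing assumption $p+\kappa\geq 4$, which forces $\dim V^\bot\geq 2$), the only fixed tangent vector is $T=0$. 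If $\Fix(K_x)$ contained a second point $y$, the initial velocity at $x$ of the geodesic segment $[x,y]$, which lies in $\Fix(K_x)$ by convexity of fixed point sets of isometries in a \cat space, would be a nonzero $K_x$-fixed tangent vector — a contradiction. Hence $\Fix(K_x)=\{x\}$, and Proposition~\ref{topsimp} delivers the topological simplicity of $\PO_\C(p,\kappa)$, $\PO_\H(p,\kappa)$, and $\PO_\R^o(p,\kappa)$.
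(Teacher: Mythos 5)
Your proof is correct and follows essentially the same route as the paper: combine the dense simple normal subgroup supplied by Proposition~\ref{dense} with the abstract criterion of Proposition~\ref{topsimp}. The only difference is that the paper asserts triviality of the center without justification, whereas you supply a correct verification via faithfulness of the action and the fact that $\Fix(K_x)=\{x\}$, read off from the isotropy representation.
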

 In case $\kappa=\infty$, one recovers Theorem \ref{thmtopsimp}.
\begin{proof}Since $\PO_\K(p,\kappa)$ or $\PO_\R^o(p,\kappa)$ has trivial center, the theorem follows from Propositions \ref{dense} and \ref{topsimp}.\end{proof}


%

\section{Actions on infinite dimensional symmetric spaces}
\begin{defn} A subgroup $G$ of $\Isom(\calX_\K(p,\kappa))$ is \emph{geometrically dense} if $G$ has no fixed point at infinity nor invariant strict totally geodesic subspace. A representation $\rho\colon G\to\Isom(\calX_\K(p,\kappa))$ is geometrically dense if its image is so.
\end{defn}

\begin{rem} Our definition is different from the one in \cite{MR2574741}. There, totally geodesic subspaces are replaced by convex subspaces because they consider \cat spaces instead of symmetric spaces. For symmetric spaces of non-compact type and rank 1, the two definitions do not coincide but for finite dimensional irreducible higher rank symmetric spaces, both definitions are equivalent to Zariski-density.

Our definition coincides with the one in \cite{duchesne2018boundary} and with \emph{geometric Zariski density} in \cite[\S5]{MR3263898}
\end{rem} 

In \cite[Lemma 4.2]{MR2574740}, it is proved that for any group $G\leq\Isom(\mathcal{X})$ of a \cat space $\mathcal{X}$, the boundary of the convex closure any $G$-orbit does not depend on the choice of the orbit. Since the normalizer $\mathcal{N}(G)$ of $G$ permutes the $G$-orbits, this yields a subspace $\Delta G\subset\partial \mathcal{X}$, namely the convex closure of  any orbit, which is $\mathcal{N}(G)$-invariant.

In \cite{MR1704987} Leeb showed that the \emph{geometric dimension} of a {\upshape CAT($1$)}\xspace space $\mathcal{X}$, a notion he introduced, can be computed as the supremum of the topological dimension of compact subsets of $\mathcal{X}$. A \cat space $\mathcal{X}$ has \emph{telescopic dimension} at most $k\in\NN$ if any asymptotic cone of $\mathcal{X}$ has geometric dimension at most $k$. If such a finite $k$ exists then $\mathcal{X}$ is said to have \emph{finite telescopic dimension} and in this case its telescopic dimension is defined to be the minimal such $k$.

 A main feature of the spaces $\mathcal{X}_\KK(p,\kappa)$ is their finite telescopic dimension, which is exactly $p$. Thus it coincides with the rank of $\mathcal{X}_\KK(p,\kappa)$ which is the dimension of maximal flat subspaces. We refer to \cite{MR2558883} for details about this dimension and \cite{MR3044451} for a computation of this telescopic dimension for  $\mathcal{X}_\KK(p,\kappa)$.

A consequence of the finiteness of this telescopic dimension is the following fact.

\begin{thm}[{\cite[Theorem 1.1]{MR2558883}}]\label{cl}Let $\mathcal{X}$ be a complete \cat space of finite telescopic dimension and
${\left(\mathcal{X}_\alpha\right)}_{\alpha\in A}$ be a filtering family of closed convex subspaces. Then either the intersection
$\cap_{\alpha\in A} \calX_\alpha$ is non-empty, or the intersection of the visual boundaries $\cap_{\alpha\in A} \partial \calX_\alpha$  is a 
non-empty subset of $\partial \mathcal{X}$ of intrinsic radius at most $\pi/2$.
\end{thm}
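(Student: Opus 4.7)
The proof proceeds by dichotomy on the behaviour of nearest-point projections. Fix a basepoint $x_0 \in \calX$, let $\pi_\alpha \colon \calX \to \calX_\alpha$ denote the ($1$-Lipschitz) nearest-point projection onto the closed convex subspace $\calX_\alpha$, and set $p_\alpha = \pi_\alpha(x_0)$, $d_\alpha = d(x_0, p_\alpha)$. Reindex so that $\alpha \leq \beta$ means $\calX_\beta \subseteq \calX_\alpha$, which is possible by the filtering hypothesis.

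If $(d_\alpha)$ is bounded, the obtuse-angle inequality $d(c, p_\alpha)^2 + d_\alpha^2 \leq d(x_0, c)^2$, valid for every $c \in \calX_\alpha$, applied with $c = p_\beta$ for $\alpha \leq \beta$, shows that $(d_\alpha)$ is monotone non-decreasing and hence convergent, and moreover that $(p_\alpha)$ is a Cauchy net. Completeness of $\calX$ and closedness of each $\calX_\alpha$ place its limit in $\bigcap_\alpha \calX_\alpha$, giving the first alternative. From now on I assume $d_\alpha \to \infty$ along the filter.

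To produce $\xi \in \bigcap_\alpha \partial \calX_\alpha$ in this case, I cannot directly use compactness of the sphere of directions at $x_0$, which fails in infinite dimension; finite telescopic dimension enters through the asymptotic cone at scale $(d_\alpha)$, a complete \cat space of geometric dimension at most $p$ in which the rescaled $\calX_\alpha$ still form a filtering family and the rescaled projections collapse to a point on the unit sphere about the origin. Finite geometric dimension of the cone supplies the compactness needed to extract a limit direction which, via the projection argument applied back in $\calX$, corresponds to a point $\xi \in \partial \calX$ belonging to every $\partial \calX_\alpha$. For the intrinsic Tits radius bound, suppose $B := \bigcap_\alpha \partial \calX_\alpha$ had intrinsic radius exceeding $\pi/2$; using convexity of Busemann functions based at points of $B$ together with the CAT($1$) structure of the Tits boundary, one shows that combined Busemann sublevel sets intersected with the filtering family again produce a common bounded minimiser in $\bigcap_\alpha \calX_\alpha$, contradicting emptiness.

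The main obstacle is the extraction of $\xi$: without compactness of spheres of directions, finite telescopic dimension must be leveraged through the asymptotic cone or the circumcenter-at-infinity machinery developed for precisely this purpose by Caprace--Lytchak.
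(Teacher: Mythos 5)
A preliminary remark: the paper does not prove this statement at all; it is quoted as Theorem~1.1 of Caprace--Lytchak [MR2558883], so the only thing to measure your proposal against is the actual proof in that reference. Your first half is correct and complete: the obtuse-angle inequality for nearest-point projections shows that if $(d_\alpha)$ is bounded then $(p_\alpha)$ is a Cauchy net whose limit lies in $\bigcap_\alpha\calX_\alpha$, and this uses nothing beyond completeness and convexity (in particular, no telescopic dimension).

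The second half is where the theorem lives, and there your text is a plan rather than a proof; both of its key steps are asserted, not established. First, the extraction of $\xi$: you pass to an asymptotic cone and claim that the rescaled projections ``collapse to a point on the unit sphere'' and that a limit direction there ``corresponds to a point $\xi\in\partial\calX$ belonging to every $\partial\calX_\alpha$''. Neither claim is justified, and the second is precisely the hard content: a point of an asymptotic cone is not a point of $\partial\calX$, and converting one into the other means producing an actual geodesic ray of $\calX$ asymptotic to every $\calX_\alpha$. This cannot be waved through, because there exist complete unbounded \cat spaces (necessarily of infinite telescopic dimension) with no geodesic rays at all; even the nonemptiness of a single $\partial\calX_\alpha$ is a theorem in this setting, not an observation. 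Second, the radius bound: ``combined Busemann sublevel sets \dots\ produce a common bounded minimiser'' names no convex function, gives no reason its sublevel sets should be bounded, and no reason a minimiser would lie in $\bigcap_\alpha\calX_\alpha$. In Caprace--Lytchak the bound $\pi/2$ is not obtained by contradiction: the point $\xi$ is constructed, by induction on the telescopic dimension, together with the estimate that every point of $\bigcap_\alpha\partial\calX_\alpha$ lies within Tits distance $\pi/2$ of it, using the fact that the Tits boundary of a space of telescopic dimension $n$ is a CAT($1$) space of geometric dimension at most $n-1$ and the circumcentre result quoted in this paper as Proposition~\ref{cl2}. Your closing admission that the main obstacle is handled by ``the circumcenter-at-infinity machinery developed for precisely this purpose by Caprace--Lytchak'' is circular: that machinery \emph{is} the proof of the statement you set out to prove.
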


Moreover the boundary of a \cat space of telescopic dimension has finite geometric dimension \cite[Proposition 2.1]{MR2558883} and this allows us to use the following fixed point statement.

\begin{prop}[{\cite[Proposition 1.4]{MR2180749}}]\label{cl2}Let $\mathcal{X}$ be a {\upshape CAT($1$)}\xspace space of finite dimension and of  intrinsic radius at most $\pi/2$. Then $\mathcal{X}$ has a circumcenter which is fixed by every isometry of $\mathcal{X}$.
\end{prop}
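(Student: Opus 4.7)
The plan is to extract a canonical point of $\mathcal{X}$ from its metric alone, so that invariance under the full isometry group follows automatically. The natural candidate is a \emph{circumcenter}, i.e.\ a minimizer of
\[
\phi(x)=\sup_{y\in\mathcal{X}}d(x,y),
\]
whose infimum equals the intrinsic radius $r\leq\pi/2$. Since $\phi$ is $1$-Lipschitz and manifestly $\Isom(\mathcal{X})$-invariant, its set $M$ of minimizers is closed, convex (by CAT($1$) ball convexity below radius $\pi/2$), and preserved by every isometry of $\mathcal{X}$.

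For existence and uniqueness in the strict case $r<\pi/2$, I would use the CAT($1$) midpoint comparison: for the midpoint $m$ of two points $p_1,p_2\in\mathcal{X}$ and any $y\in\mathcal{X}$,
\[
\cos d(m,y)\cdot\cos\tfrac{d(p_1,p_2)}{2}\;\geq\;\tfrac{1}{2}\bigl(\cos d(p_1,y)+\cos d(p_2,y)\bigr).
\]
Applied to two putative distinct circumcenters $c_1\neq c_2$ and supremised over $y$, this gives $\phi(m)<r$ as soon as $\cos r>0$, contradicting minimality; hence $M$ contains at most one point. The same inequality applied to a $\phi$-minimising sequence $(x_n)$ forces $d(x_n,x_m)\to 0$ quantitatively, so $(x_n)$ is Cauchy and completeness of $\mathcal{X}$ produces the unique circumcenter. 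Its $\Isom(\mathcal{X})$-invariance is then immediate.

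The delicate case is $r=\pi/2$, where the midpoint inequality degenerates and $M$ may contain many points; this is where the finite-dimensionality hypothesis enters decisively. As a closed convex subspace of $\mathcal{X}$, the set $M$ is itself a CAT($1$) space of finite geometric dimension (bounded by that of $\mathcal{X}$) and of intrinsic radius $\leq\pi/2$. If the induced intrinsic radius of $M$ is strictly less than $\pi/2$, the previous paragraph applied inside $M$ produces its unique circumcenter, which is then the sought canonical point. Otherwise one iterates: form a descending chain $\mathcal{X}=M_0\supset M_1=M\supset M_2\supset\cdots$, each $M_{i+1}$ being the set of circumcenters of $M_i$, each closed, convex and $\Isom(\mathcal{X})$-invariant. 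The main obstacle, and the essential use of finite dimensionality, is to verify that at each step either the intrinsic radius strictly drops below $\pi/2$ or the geometric dimension strictly decreases, forcing termination in finitely many steps: the intuition is that a CAT($1$) space on which $\phi\equiv\pi/2$ at every point is covered by antipodal ``equatorial'' pairs, a structure which in a finite-dimensional CAT($1$) setting cannot be sustained without the next circumcenter set $M_{i+1}$ being a strictly lower-dimensional closed convex subspace. Once termination at a singleton is established, the canonicity of every step of the construction delivers the required common fixed point of $\Isom(\mathcal{X})$.
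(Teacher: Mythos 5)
First, a point of comparison: the paper does not prove this proposition at all --- it is quoted verbatim from Balser--Lytchak (the cited reference), so there is no internal argument to measure your proposal against. Your outline does follow the same general strategy as that source: extract a canonical, metrically defined circumcenter set, observe it is closed, convex and $\Isom(\mathcal{X})$-invariant, and iterate until a single point remains. The case $r<\pi/2$ is handled correctly: the spherical midpoint inequality you quote does give both uniqueness and the Cauchy property of minimizing sequences, since $\cos r>0$ produces a quantitative gain at the midpoint.

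The proof nevertheless has a genuine gap, and it sits exactly where the theorem lives, namely the boundary case $r=\pi/2$. Two distinct things go missing there. (1) \emph{Existence}: your Cauchy argument for a minimizing sequence of $\phi$ degenerates when $\cos r=0$ --- the midpoint inequality then only yields $\phi(m)\le\pi/2$, which is no information --- so you have not shown that the circumcenter set $M$ is non-empty; this already requires finite dimensionality (in infinite dimensions a bounded CAT($1$) space of radius $\pi/2$ need not admit any circumcenter), and no mechanism for it appears in your text. (2) \emph{Termination}: the assertion that at each stage of the iteration either the intrinsic radius drops strictly below $\pi/2$ or the geometric dimension strictly decreases is precisely the content of the proposition, and you explicitly present it only as an ``intuition'' about antipodal equatorial pairs. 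Without a proof of this dimension-drop lemma the chain $M_0\supset M_1\supset\cdots$ could in principle stabilize at a set that is not a point, or never stabilize, and the argument collapses. Balser and Lytchak supply exactly this missing engine via a structural analysis of finite-dimensional CAT($1$) spaces in which every point is a circumcenter of radius $\pi/2$ (an induction on geometric dimension passing through spaces of directions), and that analysis cannot be reconstructed from what you have written. So the proposal is a correct reduction of the problem to its hard core, but the hard core is left unproved.
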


\begin{lem}\label{lem:min} Let $G$ be a subgroup of $\Isom(\mathcal{X}_\KK(p,\kappa))$ whose normalizer $\mathcal{N}(G)$ in $\Isom(\mathcal{X}_\KK(p,\kappa))$ has no fixed point at infinity. Then $G$ has a unique minimal  invariant closed convex subset and a unique minimal invariant totally geodesic subspace.
\end{lem}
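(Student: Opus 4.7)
The plan is to deduce both assertions from Theorem~\ref{cl} together with the fixed-point statement Proposition~\ref{cl2}, exploiting that $\mathcal{X}_\KK(p,\kappa)$ is a complete \cat space of finite telescopic dimension (equal to $p$).

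First I would consider the family $\mathcal{F}$ of all non-empty $G$-invariant closed convex subsets of $\mathcal{X}_\KK(p,\kappa)$. It contains $\mathcal{X}_\KK(p,\kappa)$ itself, and is filtering because the intersection of finitely many $G$-invariant closed convex sets is again of the same type. Set $M=\bigcap_{C\in\mathcal{F}}C$. Theorem~\ref{cl} produces a dichotomy: either $M\neq\emptyset$, or the set $B=\bigcap_{C\in\mathcal{F}}\partial C\subseteq\partial\mathcal{X}_\KK(p,\kappa)$ is a non-empty subset of intrinsic radius at most $\pi/2$.

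The core of the proof is to exclude the second alternative. The key observation is that $\mathcal{F}$ is invariant under $\mathcal{N}(G)$: for $h\in\mathcal{N}(G)$ and $C\in\mathcal{F}$, the set $hC$ is closed and convex, and $g(hC)=h(h^{-1}gh)C=hC$ for every $g\in G$, so $hC\in\mathcal{F}$. Thus $\mathcal{N}(G)$ permutes $\mathcal{F}$ and therefore preserves $B$. Since the Tits boundary has finite geometric dimension by \cite[Proposition 2.1]{MR2558883} and $B$ has intrinsic radius at most $\pi/2$, Proposition~\ref{cl2} yields a circumcenter of $B$ fixed by every isometry of $B$; this circumcenter is a point of $\partial\mathcal{X}_\KK(p,\kappa)$ fixed by $\mathcal{N}(G)$, contradicting the hypothesis. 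Hence $M\neq\emptyset$, and by construction $M$ is contained in every member of $\mathcal{F}$, making it the unique minimal $G$-invariant closed convex subset.

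For the totally geodesic assertion, let $\mathcal{T}$ denote the family of $G$-invariant totally geodesic subspaces and set $T=\bigcap_{Y\in\mathcal{T}}Y$. Every $Y\in\mathcal{T}$ is in particular a closed convex $G$-invariant set, hence contains $M$, so $T\supseteq M$ is non-empty. Corollary~\ref{cor_min_tot} ensures that $T$ is itself totally geodesic; its $G$-invariance is immediate; and by construction it lies in every member of $\mathcal{T}$. So $T$ is the unique minimal $G$-invariant totally geodesic subspace.

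The main delicate point I anticipate is the application of Proposition~\ref{cl2} to $B$: one must verify that $B$ fits the hypotheses as a \emph{finite-dimensional} CAT($1$) set of intrinsic radius at most $\pi/2$, which ultimately rests on the finite telescopic dimension of $\mathcal{X}_\KK(p,\kappa)$ and the associated finite geometric dimension of its Tits boundary. Equally important is tracking that the contradiction arises at the level of the full normalizer $\mathcal{N}(G)$ rather than of $G$ alone, which is exactly what the stability of $\mathcal{F}$ under $\mathcal{N}(G)$ delivers.
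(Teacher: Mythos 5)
There is a genuine gap at the very first step: the family $\mathcal{F}$ of \emph{all} non-empty $G$-invariant closed convex subsets is not filtering, so Theorem~\ref{cl} does not apply to it. Filtering requires that any two members contain a common third member, but two non-empty $G$-invariant closed convex sets can be disjoint: if $G$ fixes two distinct points $x\neq y$ (and the hypothesis on $\mathcal{N}(G)$ does not preclude this), then $\{x\}$ and $\{y\}$ both belong to $\mathcal{F}$ and no member of $\mathcal{F}$ lies in their empty intersection. Your justification --- ``the intersection of finitely many $G$-invariant closed convex sets is again of the same type'' --- overlooks that this intersection may be empty, while $\emptyset\notin\mathcal{F}$. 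Relatedly, the conclusion you aim for, namely that $M=\bigcap_{C\in\mathcal{F}}C$ is non-empty and contained in \emph{every} member of $\mathcal{F}$, is strictly stronger than the lemma and false in general: a minimal invariant closed convex subset need not be contained in every invariant closed convex subset (same example). The same objection applies verbatim to your treatment of $T=\bigcap_{Y\in\mathcal{T}}Y$.

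The correct use of the dichotomy of Theorem~\ref{cl}, combined with Proposition~\ref{cl2} applied to an $\mathcal{N}(G)$-invariant boundary set (this is the one part of your argument that is sound), only yields \emph{existence} of minimal elements: one applies it to a genuinely filtering family such as a decreasing chain $\overline{\Conv(Gx_{i+1})}\subset\overline{\Conv(Gx_{i})}$, and the hypothesis on $\mathcal{N}(G)$ rules out the alternative where the chain has empty intersection; this is how the paper proceeds. \emph{Uniqueness} then requires a separate argument that your proposal is missing entirely. The paper takes the union $\mathcal{Y}$ of all minimal invariant closed convex subsets and invokes the isometric splitting $\mathcal{Y}\simeq\mathcal{X}\times\mathcal{T}$ of \cite[Remark 39.(1)]{MR2219304}, with $G$ acting trivially on the second factor; if $\mathcal{T}$ were not a point, $G$ would preserve a family of parallel geodesics joining two minimal subsets and fix their endpoints at infinity, which is then excluded, and uniqueness of the minimal totally geodesic subspace is deduced from the convex case rather than from an intersection of all invariant totally geodesic subspaces. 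Without some version of this parallel-set argument, your proof does not establish uniqueness.
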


\begin{proof} Let $\mathcal{C}$ be the collection of either $G$-invariant closed convex subsets or $G$-invariant totally geodesic subspsaces. This collection is non-empty because it contains $\mathcal{X}_\KK(p,\kappa)$. Moreover, any intersection of elements of $\mathcal{C}$ is either empty or an element of $\mathcal{C}$. If $\mathcal{C}$ has no minimal element (for inclusion) then one can find a sequence $(x_i)$ such that $\overline{\Conv(Gx_{i+1})}\subset \overline{\Conv(Gx_{i})}$. So $\Delta G$ is non-empty set of intrinsic radius at most $\pi/2$.  Since $\Delta G$ is $\mathcal{N}(G)$-invariant, $\mathcal{N}(G)$ has a fixed point. This is a contradiction and this implies that $\mathcal{C}$ has a minimal element.

Let us prove first the uniqueness of a minimal closed convex $G$-invariant subspace. 
Let $\mathcal{Y}$ be the union of all  minimal closed convex $G$-invariant subspaces. Let $\mathcal{X}$ be such a minimal  closed convex $G$-invariant subspace. By \cite[Remark 39.(1)]{MR2219304}, $\mathcal{Y}$ splits isometrically as $\mathcal{Y}\simeq\mathcal{X}\times\mathcal{T}$ where the action of $G$ on $\mathcal{Y}$ is diagonal, being trivial on $\mathcal{T}$. Assume that $\mathcal{T}$ is not reduced to a point, that is there are two minimal subspaces $\mathcal{X}$ and $\mathcal{X}'$. Let $x\in \mathcal{X}$ and $x'$ its projection on $\mathcal{X}'$. For any $g\in G$, the convex hull of the segments $\left[x,x'\right]$ and $\left[\gamma x,\gamma x'\right]$ is a flat strip. The four vertices and their convex hull lie in some totally geodesic subspace of finite dimension. This flat strip is thus contained  in some flat totally geodesic subspace of $\mathcal{X}_\KK(p,\kappa)$. So, if $\ell$ is the geodesic line through $x$ and $x'$, then $\gamma\ell$ is parallel to $\ell$ and the two points at infinity of $\ell$ are fixed by $g$. Thus we have a contradiction and there is a unique minimal closed convex invariant subspace $\mathcal{X}$.

If there were two minimal invariant totally geodesic subspaces, each one would have a minimal closed convex invariant subspace by applying the argument at the beginning of the proof to each of them. Thus, we know there is a unique minimal invariant totally geodesic subspace. \end{proof}

%

In the fifties, Tits conjectured that convex subcomplexs of spherical buildings are either buildings themselves or they have a center, i.e. a fixed point for all automorphisms preserving the subcomplex. The interest for this conjecture was renewed in relation to Serre's complete reducibility \cite[\S 2.4]{MR2167207}. It was proved first in the classical cases \cite{MR2228217} and extended later in the exceptional cases. We recommend  \cite{ediss11005} for details about this conjecture.

\begin{thm}[Solution of the Center Conjecture]\label{centerconjecture} Let $\Delta$ be some spherical building of type $B_p$. If $C$ is a convex subcomplex of $\Delta$, either $\Delta$ has a center, or any simplex in $\Delta$ has an opposite simplex in $\Delta$.
\end{thm}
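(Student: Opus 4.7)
The plan is to work inside the concrete polar-space realization of a type-$B_p$ building, which for the application at hand is in any case how the building arises: $\Delta$ is the flag complex of totally isotropic subspaces of a non-degenerate $(\KK,\sigma)$-Hermitian space $(V_0,Q_0)$ of Witt index $p$, simplices are isotropic flags $F_1 < F_2 < \dots < F_k$, and two flags are opposite exactly when they are transverse in the usual polar-space sense. Under this dictionary, a convex subcomplex $C$ corresponds to a family $\mathcal F$ of isotropic subspaces closed under incidence together with the combinatorial convexity condition (closure under enclosures of pairs of simplices inside apartments). Automorphisms of $\Delta$ preserving $C$ then act on $\mathcal F$ preserving incidence and the polar structure.

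The strategy is to associate to $C$ the $\KK$-subspace $V\subseteq V_0$ spanned by every isotropic subspace occurring in $\mathcal F$ and to consider the restricted form $\varphi_V = \varphi|_V$, together with its radical $R = V\cap V^{\perp_{\varphi}}$. Both $V$ and $R$ are canonically associated to $C$ and therefore invariant under every automorphism of $\Delta$ preserving $C$. I would now dichotomize on $R$: if $R\neq 0$, then $R$ is a nontrivial totally isotropic subspace fixed setwise by $\Aut_\Delta(C)$, which produces a vertex (or a nested flag of vertices) of $\Delta$ fixed by the stabilizer of $C$, hence a center; geometrically this is exactly a fixed simplex in the geometric realization of $\Delta$, which is what the Center Conjecture calls a center. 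If $R=0$, then $(V,\varphi_V)$ is itself a non-degenerate polar space and one expects $C$ to be a sub-building, so that every simplex of $C$ has an opposite in $C$. To prove this I would induct on $p$: given an isotropic flag $F\in\mathcal F$, use Witt's theorem inside $V$ to produce an abstract opposite flag $F'$ in $V$, then apply the combinatorial convexity of $C$ together with the inductive hypothesis applied to residues of simplices in $C$ to show that $F'$ may be chosen inside $\mathcal F$.

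The hard part will be the last step: making the Witt-theoretic opposite flag $F'$ actually lie inside $\mathcal F$ rather than merely inside $V$. Combinatorial convexity controls enclosures of pairs, not existence of transversals, so one must bootstrap. The scheme I have in mind is to look, for each simplex $\sigma\in C$, at the residue $\mathrm{Res}_C(\sigma)$, which is a convex subcomplex of the residue building $\mathrm{Res}_\Delta(\sigma)$; the latter is a spherical building of strictly smaller type (a join of a lower-rank $B$-building with some $A$-type factors), so induction on $p$ applies and gives the two alternatives inside each residue. One then has to glue these local outputs: either some residue has a center, and by equivariance of the construction one propagates it to a global center of $C$, or every residue is itself a sub-building, from which one deduces by a connectedness-of-the-chamber-graph argument that every simplex of $C$ has an opposite in $C$. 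This is precisely the bookkeeping that Mühlherr–Tits carry out for classical types, and since $B_p$ is classical no exceptional-type machinery is needed; everything in the inductive step reduces, via the polar-space translation, to linear algebra over $\KK$ governed by Witt's theorem, which is what makes the classical case tractable compared to the later exceptional cases of the Center Conjecture.
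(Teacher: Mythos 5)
The first thing to note is that the paper does not prove Theorem~\ref{centerconjecture} at all: it is quoted as a known theorem, with the classical cases (which include type $B_p$) attributed to \cite{MR2228217} and the reader referred to \cite{ediss11005} for the exceptional cases. (The statement as printed also has typos: the center and the opposite simplices should live in the convex subcomplex $C$, not in $\Delta$.) So your proposal is not competing with an argument in the paper but with the actual M\"uhlherr--Tits proof, and measured against that it has genuine gaps. The dichotomy you build on --- let $V$ be the span of all isotropic subspaces occurring in $C$ and $R$ its radical --- is not the engine of any proof of this theorem, and neither half delivers what is claimed. If $R\neq 0$ you obtain a totally isotropic subspace canonically attached to $C$, hence a vertex of $\Delta$ fixed by $\mathrm{Stab}(C)$; but a center in the sense of the conjecture is a simplex \emph{of $C$} fixed by $\mathrm{Stab}(C)$, and you give no argument that the vertex determined by $R$ (or any simplex built from it) lies in $C$. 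More seriously, the claim that $R=0$ forces every simplex of $C$ to have an opposite in $C$ is essentially the whole theorem restated in the non-degenerate case, and you do not prove it: Witt's theorem produces an opposite flag in $\Delta$, but producing one inside $C$ is exactly the difficulty, as you yourself acknowledge before deferring it to ``the bookkeeping that M\"uhlherr--Tits carry out.'' A proof that ends by invoking the proof it is meant to replace is a citation, not a proof.

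The inductive scheme via residues also does not close up as described. Residues of simplices of $C$ are indeed convex subcomplexes of lower-rank buildings, so the induction hypothesis applies to them, but the two gluing steps --- propagating a center of some residue to a center of $C$, and deducing complete reducibility of $C$ from complete reducibility of all residues --- are precisely where the combinatorial work of \cite{MR2228217} lives; it is not a routine connectedness-of-the-chamber-graph argument, and their proof does not proceed through the radical of a spanning subspace. If your intention is to use the theorem, do what the paper does and cite it; if your intention is to prove it for type $B_p$, the polar-space translation is a reasonable starting point, but everything after ``use Witt's theorem inside $V$'' still needs to be an actual argument.
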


\begin{cor}\label{corcc} Let $G$ be a subgroup of $\Isom\left(\calX_\K(p,\kappa)\right)$ with a non-empty fixed point set at infinity $Y$. Either $Y$ is a sub-building of $\partial\calX_\K(p,\kappa)$ or the normalizer of $G$ in $\Isom\left(\calX_\K(p,\kappa)\right)$ has a fixed point at infinity.
\end{cor}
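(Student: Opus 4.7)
The overall plan is to apply Theorem~\ref{centerconjecture} to the fixed point set $Y\subseteq\partial\calX_\K(p,\kappa)$, after first verifying that $Y$ is a convex subcomplex of the building and that the normalizer $\mathcal{N}(G)$ preserves it. The two alternatives produced by the Center Conjecture will translate directly into the two alternatives of the statement.

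The first step is to argue that $Y$ is a convex subcomplex of $\partial\calX_\K(p,\kappa)$. By Lemma~\ref{polar_aut}, every element of $\Isom(\calX_\K(p,\kappa))$, and hence every element of $G$, acts as a type-preserving simplicial automorphism of the building. If a point $y\in Y$ lies in the interior of a simplex $\sigma$, then for each $g\in G$ the image $g\sigma$ still contains $y$ in its interior, forcing $g\sigma=\sigma$; type-preservation then forces $g$ to fix the vertices of $\sigma$, and hence $\sigma$ pointwise. Thus $Y$ is a union of closed simplices. Metric convexity (any two points of $Y$ at distance less than $\pi$ are joined by a geodesic inside $Y$) follows from the fact that in a {\upshape CAT($1$)}\xspace space the unique geodesic between two such points is setwise $G$-invariant, and an isometry of an interval fixing its endpoints is the identity.

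Next, I observe that $\mathcal{N}(G)$ preserves $Y$: for any $n\in\mathcal{N}(G)$ and $y\in Y$, every $g\in G$ satisfies $g(ny)=n(n^{-1}gn)y=ny$. Applying Theorem~\ref{centerconjecture} to $Y$ as a convex subcomplex of the type $B_p$ building $\partial\calX_\K(p,\kappa)$ yields two possibilities. Either every simplex of $Y$ admits an opposite simplex in $Y$, which, combined with convexity, is the standard characterization of $Y$ being a sub-building of $\partial\calX_\K(p,\kappa)$. Or $Y$ has a center, i.e.\ a point (or a simplex, whose barycenter is a point) fixed by every simplicial automorphism of $\partial\calX_\K(p,\kappa)$ stabilizing $Y$; this center is then in particular fixed by $\mathcal{N}(G)$, providing the desired fixed point at infinity. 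The main delicate step is the first one: the conclusion that $Y$ is a subcomplex (not merely a closed convex metric subset) relies crucially on the type-preservation furnished by Lemma~\ref{polar_aut}.
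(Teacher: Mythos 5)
Your proof is correct and follows essentially the same route as the paper: both verify via Lemma~\ref{polar_aut} that $Y$ is a convex subcomplex preserved by the normalizer and then invoke the Center Conjecture (Theorem~\ref{centerconjecture}) to obtain the dichotomy. Your write-up is in fact slightly more detailed on why $Y$ is a subcomplex and why convexity holds, but the underlying argument is identical.
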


\begin{proof} The group $\Isom\left(\calX_\K(p,\kappa)\right)$ acts by preserving type automorphisms on the spherical building $\partial\calX_\K(p,\kappa)$ (Lemma~\ref{polar_aut}), which is of type $B_p$. The set of fixed points in $\partial\calX_\K(p,\kappa)$ of some isometry $g\in\Isom\left(\calX_\K(p,\kappa)\right)$ is convex (because it induces an isometry for the Tits metric on  $\partial\calX_\K(p,\kappa)p$) and it is a subcomplex because if some $\xi\in \partial\calX_\K(p,\kappa)$ is fixed by $g$ the closure of the smallest facet that contains $\xi$ is pointwise fixed.

By Theorem~\ref{centerconjecture}, The fixed point set $Y$ of $G$ is either a sub-building or there is a point $\xi\in Y$ which invariant under all typer preserving automorphisms that stabilize $Y$. Since the normalizer of $G$ stabilizes $Y$, we have the result.
\end{proof}


The following proposition is similar to \cite[\S4.C]{MR2574740}
\begin{prop}\label{normal} Let $G,N$ be subgroups of $\Isom(\calX_\K(p,\kappa))$ such that $N$ is a normal subgroup of $G$. The subgroup $G$ is geometrically dense if and only if $N$ is so.
\end{prop}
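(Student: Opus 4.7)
For the easy direction: since $N\subseteq G$, any $G$-fixed point at infinity is automatically $N$-fixed, and any $G$-invariant totally geodesic subspace is automatically $N$-invariant. By contrapositive, $N$ geometrically dense implies $G$ geometrically dense.

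For the converse, I would assume $G$ is geometrically dense and first observe that $G\subseteq\mathcal{N}(N)$, so any point at infinity fixed by $\mathcal{N}(N)$ would be fixed by $G$; hence $\mathcal{N}(N)$ has no fixed point at infinity. This puts us in the hypothesis of Lemma~\ref{lem:min}, which supplies a unique minimal $N$-invariant totally geodesic subspace $\mathcal{Y}$. For any $g\in G$, the translate $g\mathcal{Y}$ is $gNg^{-1}$-invariant, hence (by normality) $N$-invariant, and minimal with this property; by uniqueness $g\mathcal{Y}=\mathcal{Y}$, so $\mathcal{Y}$ is $G$-invariant. Geometric density of $G$ then forces $\mathcal{Y}=\calX_\K(p,\kappa)$, ruling out any strict $N$-invariant totally geodesic subspace.

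To finish, suppose toward contradiction that $N$ fixes some $\xi\in\partial\calX_\K(p,\kappa)$, and let $Y=\Fix_{\partial\calX_\K(p,\kappa)}(N)$. Since $N$ is normal in $G$, the set $Y$ is $G$-invariant, and Corollary~\ref{corcc} forces one of two alternatives: either $\mathcal{N}(N)\supseteq G$ has a fixed point at infinity (contradicting geometric density of $G$), or $Y$ is a sub-building of $\partial\calX_\K(p,\kappa)$. In the latter case, $Y$ is itself a spherical building and hence contains a pair of opposite vertices $\xi_0,\eta_0$. The union of geodesics from $\xi_0$ to $\eta_0$, as described in the paragraphs preceding Lemma~\ref{polar_aut}, is an $N$-invariant totally geodesic subspace splitting as $\R\times\calX_{\xi_0}$; since the irreducible symmetric space $\calX_\K(p,\kappa)$ has no $\R$-factor, this subspace is strict, contradicting the previous paragraph.

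The main obstacle is precisely this last reduction, converting a fixed point at infinity of $N$ into a strict invariant totally geodesic subspace. The Center Conjecture, via Corollary~\ref{corcc}, is the essential tool: without it the fixed-point set $Y$ could have an unwieldy combinatorial structure, whereas with it the only alternative to a further fixed point at infinity (which is immediately vetoed by geometric density of $G$) is a genuine sub-building, and then the existence of opposite vertices in any spherical building supplies the required geometric obstruction inside $\calX_\K(p,\kappa)$.
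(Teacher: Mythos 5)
Your proof is correct and follows essentially the same route as the paper: the easy direction by inclusion, then Lemma~\ref{lem:min} plus uniqueness to make the minimal $N$-invariant totally geodesic subspace $G$-invariant, then Corollary~\ref{corcc} to reduce a fixed point at infinity of $N$ to a pair of opposite vertices, whose parallel set (the paper describes it concretely as $\{E : \dim(E\cap(E_++E_-))=k\}$, you invoke the $\R\times\calX_{\xi_0}$ splitting) is a strict $N$-invariant totally geodesic subspace. Your added justification that this subspace is strict because $\calX_\K(p,\kappa)$ has no Euclidean de~Rham factor is a welcome detail the paper leaves implicit.
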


\begin{proof}It is straightforward that if $N$ is geometrically dense so is $G$. So, let us assume that $G$ is geometrically dense.  By Lemma~\ref{lem:min}, since $G$ has no fixed point at infinity, $N$ has a unique minimal invariant totally geodesic subspace $\mathcal{Y}$. Its uniqueness implies that it is $G$-invariant as well. Since $G$ is geometrically dense, $\mathcal{Y}=\calX_\K(p,\kappa)$.

Now, If $N$ has fixed points at infinity then by Corollary~\ref{corcc} either $N$ has two opposite fixed points or the set of $N$-fixed points at infinity has a center and $G$ has a fixed point at infinity. The last case is a contradiction.

If $N$ has opposite fixed points, there are two isotropic subspaces $E_-,E_+$ of the same dimension $k\leq p$ such that $E_-+E_+$ is non-degenerate. Let $\mathcal{Y}$ be the set of elements $E$ of $\calX_\K(p,\kappa)$ such that $E\cap (E_++E_-)$ has dimension $k$. The subspace $\mathcal{Y}$ is a strict $N$-invariant totally geodesic subspace and once again we have a contradiction.

So $N$ has no fixed point at infinity and thus is geometrically dense. 
\end{proof}

%
 
 \section{Representations of finite dimensional simple Lie groups of rank at least 2}
 Let $G$ be a connected semisimple Lie group with trivial center and no compact factor. Then $G$ is the connected component to the isometry group of a symmetric space of non-compact $\mathcal{X}_G$ which is of the form $G/K$ where $K$ is a maximal compact subgroup.
  
 \begin{prop}\label{toteq}Let $G$ be a connected simple non-compact Lie group with trivial center. Let $\rho\colon G\to\Isom(\calX_\K(p,\kappa))$ be a continuous representation without fixed point in $\mathcal{X}_{\K}(p,\kappa)$ nor in $\partial \calX_\K(p,\kappa)$.\\
 If the real rank of $G$ is at least 2 then there is a $G$-equivariant totally geodesic isometric embedding of the symmetric subspace $\mathcal{X}_G$ in $\calX_\K(p,\kappa)$. 
 \end{prop}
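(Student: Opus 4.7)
The plan is to reduce to a uniform lattice $\Gamma\leq G$, invoke the rigidity theorem \cite[Theorem~1.2]{MR3343349} to obtain a $\Gamma$-equivariant totally geodesic isometric embedding of $\calX_G$ into the target, and then upgrade the $\Gamma$-equivariance to full $G$-equivariance. This last step is the principal obstacle.

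First I pick a torsion-free cocompact lattice $\Gamma\leq G$ (such a lattice exists by a classical theorem of Borel). Since the cited rigidity theorem is formulated for geometrically dense representations, I first replace $\calX_\K(p,\kappa)$ by a minimal $\rho(G)$-invariant totally geodesic subspace: its existence is provided by Lemma~\ref{lem:min}, whose hypothesis on the normalizer of $\rho(G)$ is immediate because the normalizer contains $\rho(G)$, and $\rho(G)$ has no fixed point at infinity by assumption. On this subspace $\rho(G)$ is geometrically dense by minimality, and passing possibly to a further minimal $\rho(\Gamma)$-invariant totally geodesic subspace inside it ensures that $\rho(\Gamma)$ is also geometrically dense. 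Applying \cite[Theorem~1.2]{MR3343349} to $\rho|_\Gamma$ then yields a $\Gamma$-equivariant totally geodesic isometric embedding $\iota:\calX_G\to\calX_\K(p,\kappa)$, and I set $\mathcal{Y}:=\iota(\calX_G)$.

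The key step is to show that $\mathcal{Y}$ is $\rho(G)$-invariant. The stabilizer $H:=\{g\in G:\rho(g)\mathcal{Y}=\mathcal{Y}\}$ is a closed subgroup of $G$ (by continuity of $\rho$ and closedness of $\mathcal{Y}$) containing $\Gamma$, so $G/H$ is compact. The identity component $H^0$ is closed, connected, and normalized by $\Gamma$, hence by Borel density normal in $G$, so $H^0=\{1\}$ or $H^0=G$ by simplicity of $G$. If $H^0=G$, then $H=G$ by connectedness and we are done. The discrete alternative $H^0=\{1\}$ is the main difficulty: the strategy to exclude it is to exploit continuity, observing that for $g$ close to the identity $\rho(g)\mathcal{Y}$ is a totally geodesic subspace very close to $\mathcal{Y}$ and preserved by the conjugate lattice $\rho(g\Gamma g\inv)$; a uniqueness/stability statement for the embedding in the lattice rigidity theorem --- reflecting the rigidity of finite-dimensional totally geodesic copies of $\calX_G$ inside a \cat space of finite telescopic dimension --- should then force $\rho(g)\mathcal{Y}=\mathcal{Y}$ in a neighborhood of $e$, contradicting discreteness of $H$.

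Once $\mathcal{Y}$ is known to be $\rho(G)$-invariant, the restriction $\tilde\rho:G\to\Isom(\mathcal{Y})\cong\Isom(\calX_G)$ is a continuous homomorphism whose image lies in the identity component $G\leq\Isom(\calX_G)$, and which equals the natural inclusion on $\Gamma$. Mostow--Margulis rigidity (uniqueness of the extension of the natural inclusion $\Gamma\hookrightarrow G$ to a continuous homomorphism $G\to G$) then forces $\tilde\rho$ to be the identity of $G$, so that $\iota:\calX_G\to\calX_\K(p,\kappa)$ is the desired $G$-equivariant totally geodesic isometric embedding.
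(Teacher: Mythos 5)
Your overall strategy (uniform lattice, lattice superrigidity, then upgrade to $G$-equivariance) is the same as the paper's, but the two places where you wave your hands are exactly the two places where the real work lies, and as written both are genuine gaps. First, you claim that after passing to minimal invariant totally geodesic subspaces the actions of $\rho(G)$ and $\rho(\Gamma)$ become geometrically dense. Minimality only removes invariant \emph{strict totally geodesic subspaces}; it does nothing to remove fixed points at infinity, and while $\rho(G)$ has none by hypothesis, nothing you have said prevents $\rho(\Gamma)$ from fixing a point of $\partial\calX$. (Note also that Lemma~\ref{lem:min} applied to $\rho(\Gamma)$ needs the normalizer of $\rho(\Gamma)$ to have no fixed point at infinity, which you cannot assert before settling this very question.) The paper devotes a specific argument to this: if $\xi\in\partial\calX$ were $\Gamma$-fixed, one averages the Busemann functions $\beta_{g\xi}$ over $G/\Gamma$ to produce a convex $G$-almost-invariant function, and the dichotomy ``no minimum $\Rightarrow$ $G$-fixed point at infinity'' versus ``constant on the minimal set $\Rightarrow$ a Euclidean de Rham factor, excluded by property (T)'' yields the contradiction. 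Without this step you are not entitled to invoke the lattice theorem.

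Second, and more seriously, your ``key step'' — excluding the case where the stabilizer $H$ of $\mathcal{Y}$ is discrete — is not a proof: you explicitly appeal to ``a uniqueness/stability statement \ldots\ \emph{should} then force $\rho(g)\mathcal{Y}=\mathcal{Y}$,'' but no such stability statement is stated or proved anywhere (for $g$ near $e$ the conjugate lattice $g\Gamma g^{-1}$ is a different group, so uniqueness of the minimal $\Gamma$-invariant subspace gives you nothing directly). The paper replaces this entirely by an averaging argument: it considers $\varphi(x)=\int_{G/\Gamma}d(x,g\mathcal{Y})^2\,d\mu(g\Gamma)$, which is convex, continuous and $G$-invariant; either it has no minimum (forcing a $G$-fixed point at infinity, a contradiction) or it is constant on the minimal $G$-invariant convex set $\calX$, and the strict-convexity computation with midpoints forces $d(x,g\mathcal{Y})=d(y,g\mathcal{Y})$ for all $x,y\in\calX$, whence $\calX=\mathcal{Y}$ and $\mathcal{Y}$ is $G$-invariant. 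Your final step (conjugating $\rho$ by the embedding and using that a continuous endomorphism of $G$ restricting to the identity on $\Gamma$ is the identity) does agree with the paper. To repair your argument you should replace the stability heuristic by the integral-of-squared-distances argument, and insert the Busemann-average argument to rule out $\Gamma$-fixed points at infinity before applying the lattice theorem.
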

 
 \begin{proof} Let $\Gamma$ be a cocompact lattice in $G$ (which exists, see \cite{MR3307755} for example). Moreover, up to consider a finite index subgroup, we may assume that $\Gamma$ is torsion free. Let us prove first that there is a $\Gamma$-invariant totally geodesic isometric embedding $\mathcal{X}_G\to\calX_\K(p,\kappa)$. \\

 Since $G$ has no fixed point at infinity then it has a unique minimal closed invariant subspace $\calX$ (Lemma~\ref{lem:min}). This space $\calX$ has finite telescopic dimension. It has a de Rham decomposition \cite[Proposition 6.1]{MR2558883} and the Euclidean factor has to be trivial since $G$ has property (T) and thus a fixed point on this Euclidean factor.
 
 We claim that there is a $\Gamma$-equivariant harmonic map $\calX_G\to\calX$ in the sense of Korevaar and Schoen for metric spaces. The existence of such harmonic map is provided by \cite[Theorem 3.1]{MR3343349} if we know that $\Gamma$ has no fixed point in $\partial\calX$.  

If $\Gamma$ has a fixed point at infinity $\xi\in\partial\calX$, one can define a map $f\colon \calX\to \RR$ given by the formula
$$f(x)=\int_{G/\Gamma}\beta_{g\xi}(x,x_0)d\mu(g\Gamma)$$
where $\mu$ is the $G$-invariant probability measure on $G/\Gamma$ and $x\mapsto\beta_{\eta}(x,x_0)$ is the Busemann function associated to the point $\eta\in\partial \calX_\K(p,\kappa)$ that vanishes at some base point $x_0$.

The function $f$ is convex, 1-Lipschitz and $G$-almost invariant, that is $x\mapsto f(x)-f(gx)$ is constant for any $g\in G$.  If $f$ has no minimum then $G$ has a fixed point at infinity (the center at infinity associated to the invariant filteringfamily of sub-level sets of $f$ \cite[Theorem 1.1]{MR2558883}), that is a contradiction and if $f$ has a minimum then $f$ is constant since $\calX$ is a minimal $G$-invariant convex subspace. Thanks to \cite[Proposition 4.8]{MR2558883}, $\calX$ splits as $\calX'\times\R$  and we have a contradiction with the vanishing of the Euclidean factor of $\calX$. 

So there is a $\Gamma$-equivariant harmonic map $h\colon\calX_G\to\calX$. Assume this map is constant. Then the image is a $\Gamma$-fixed point $x\in\calX_\K(p,\kappa)$ and by continuity of the orbit map $g\Gamma \mapsto gx$, the $G$-orbit of $x$ is compact and thus bounded. In particular, $G$ has a fixed point, that is a contradiction. 

Since the metric projection $\pi_\calX\colon\calX_\K(p,\kappa)\to\calX$ is 1-Lipschitz and $\Gamma$-equivariant, the map $u\mapsto\pi_\calX\circ u$ does not increase the energy of $\Gamma$-equivariant maps. Thus $h$ is harmonic as well as map from $\calX_G$ to $\calX_\K(p,\kappa)$. We can now use \cite[Proposition 4.1]{MR3343349} to show that $h$ is a smooth map and conclude as in \cite[Theorem 1.2]{MR3343349} to prove that $h$ is actually a totally geodesic embedding.

So we know that there exist totally geodesic embeddings $\mathcal{X}_G\to\calX_\K(p,\kappa)$ that are $\Gamma$-equivariant.  Let us denote by $\mathcal{Y}$  some image of a $\Gamma$-equivariant totally geodesic embedding of $\mathcal{X}_G$ in $\calX$ and let us introduce the function $\varphi\colon \calX\to \RR^+$ given by 


$$\varphi(x)=\int_{G/\Gamma}d(x,g\mathcal{Y})^2d\mu(g\Gamma).$$

The function is well defined because $\mu$ is $G$-invariant and $g\mapsto d(x,g\mathcal{Y})$ is bounded by cocompactness. The function $\varphi$ is $G$-invariant, continuous and convex. If it has no minimum then $G$ has a fixed point at infinity (by the same argument about the filteringfamily of sub-level sets)  and otherwise the minimum is realized on the whole of $\calX$ since this space is minimal among closed convex $G$-invariant subspaces. Let $x,y\in\calX$ and $m$ their midpoint. Using the fact that $\varphi$ is constant on $\calX$ and $x\mapsto d(x,g\mathcal{Y})$ is convex, one has

\begin{align*}
0&=\varphi(m)-\frac{\varphi(x) +\varphi(y)}{2}\\
&\leq\int_{G/\Gamma}\left(\frac{d(x,g\mathcal{Y})+d(y,g\mathcal{Y})}{2}\right)^2-\left(\frac{d(x,g\mathcal{Y})^2+d(y,g\mathcal{Y}_\Gamma)^2}{2}\right)d\mu(g\Gamma)\\
&\leq -\frac{1}{4}\int_{G/\Gamma}\left(d(x,g\mathcal{Y})-d(y,g\mathcal{Y})\right)^2d\mu(g\Gamma)
\end{align*}
and thus  for all $g\in G$, $d(x,g\mathcal{Y})=d(y,g\mathcal{Y})$. By taking $x\in \mathcal{Y}$ and $g$ to be the identity, one has that $\mathcal{X}=\mathcal{Y}$. So, $\mathcal{X}$ is a $G$-invariant totally geodesic subspace isometric to $\mathcal{X}_G$. Let $h\colon \mathcal{X}_G\to\mathcal{X}$ be a $\Gamma$-equivariant totally geodesic embedding. Let $m\colon G\to G$ be the continuous group homomorphism defined by $m(g)=h^{-1}\circ \rho(g)\circ h$. Since the restriction of $m$ is the identity on $\Gamma$, $m$ is the identity. This means that $h$ is $G$-equivariant. 
\end{proof} 

\begin{rem}The strategy of the first part of the proof of Proposition~\ref{toteq} is very close to the one of \cite[Theorem 2.4]{MR2574740} but this theorem does not apply directly because the space $\mathcal{X}_\K(p,\kappa)$ is not proper. It could be deduce from \cite[Theorem 1]{MR3323639} but we prefer to give a proof without the vocabulary of IRS, which is useless here. The IRS related to this proof is the one supported on the conjugacy class of $\Gamma$. \end{rem}
  
  The following theorem is the extension of Theorem~\ref{splitting} to any cardinal $\kappa$. 
   \begin{thm}\label{splittingk}Let $G$ be a connected simple non-compact Lie group with trivial center. Let $G\to\PO_\K(p,\kappa)$ be a continuous representation without totally isotropic invariant subspace.\\
 If the real rank of $G$ is at least 2 then the underlying Hilbert space $\mathcal{H}$ splits orthogonally as $E_1\oplus\dots\oplus E_k\oplus \mathcal{K}$ where each $E_i$ is finite dimensional, non-degenerate, $G$-invariant and the induced representation on $E_i$ is irreducible. The induced representation on $\mathcal{K}$ is unitary.
 \end{thm}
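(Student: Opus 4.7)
The plan is to reduce, via a classical decomposition theorem, to the case of a single \emph{quadratically irreducible} (no proper non-trivial non-degenerate invariant subspace) indefinite piece, and then to use Proposition~\ref{toteq} to extract a finite-dimensional non-degenerate invariant subspace, which quadratic irreducibility forces to be the whole piece. Concretely, the result from \cite{MR0201569,Sasvari1990} recalled in the introduction yields an orthogonal decomposition $\mathcal{H}=E_1\oplus\cdots\oplus E_k\oplus\mathcal{K}$ in which each $E_i$ is $G$-invariant, non-degenerate, of indefinite signature $(p_i,q_i)$ and quadratically irreducible, and in which the $G$-action on $\mathcal{K}$ is unitary. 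The task is then to show that each $E_i$ is finite-dimensional; once this is granted, honest irreducibility on each $E_i$ is automatic, because any $G$-invariant subspace $V\subseteq E_i$ has $V\cap V^\bot$ totally isotropic and $G$-invariant, hence $V$ is non-degenerate and by quadratic irreducibility equals $\{0\}$ or $E_i$.

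Fix an index $i$ and consider the restricted continuous representation $\rho_i\colon G\to\PO_\K(p_i,q_i)$; note that $p_i\leq p$ is finite. I first check the hypotheses of Proposition~\ref{toteq}. A $G$-fixed point in $\calX_\K(p_i,q_i)$ would be a proper positive $p_i$-dimensional $G$-invariant subspace of $E_i$, which is non-degenerate, contradicting quadratic irreducibility because $q_i\geq 1$. A $G$-fixed point in $\partial\calX_\K(p_i,q_i)$, via the building description in Lemma~\ref{polar_aut}, forces a $G$-invariant vertex, i.e.\ a $G$-invariant totally isotropic subspace, contradicting the standing hypothesis. Proposition~\ref{toteq} therefore produces a $G$-equivariant totally geodesic isometric embedding $h\colon\mathcal{X}_G\to\calX_\K(p_i,q_i)$ whose image $\mathcal{Y}$ is a finite-dimensional $G$-invariant totally geodesic subspace.

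The central step is to show that $\mathcal{Y}$ is supported on a finite-dimensional non-degenerate $G$-invariant subspace of $E_i$. Pick $V_0\in\mathcal{Y}$. By Proposition~\ref{theo} the tangent space $T_{V_0}\mathcal{Y}$ is identified with a finite-dimensional Lie triple system $\mathfrak{p}_0$ sitting inside $\mathrm{Hom}_\K(V_0,V_0^\bot)$ under the Cartan decomposition at $V_0$ of the Lie algebra of $\OO_\K(p_i,q_i)$. Because $\dim_\K V_0=p_i$ and $\mathfrak{p}_0$ are both finite, $W:=\mathfrak{p}_0(V_0)\subset V_0^\bot$ is finite-dimensional; a direct computation of brackets in the block form relative to the decomposition $V_0\oplus W\oplus(V_0\oplus W)^\bot$ then shows that every element of $\mathfrak{p}_0$ and every bracket in $[\mathfrak{p}_0,\mathfrak{p}_0]$ vanishes on $(V_0\oplus W)^\bot$. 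Hence the finite-dimensional Lie subalgebra $\mathfrak{g}_\mathcal{Y}:=\mathfrak{p}_0\oplus[\mathfrak{p}_0,\mathfrak{p}_0]$ preserves $F:=V_0\oplus W$ and acts trivially on $F^\bot$. Since $G$ is connected and $\rho_i(G)$ acts transitively on $\mathcal{Y}$ by isometries, $\rho_i(G)$ is contained in the connected Lie subgroup of $\PO_\K(p_i,q_i)$ with Lie algebra $\mathfrak{g}_\mathcal{Y}$, and therefore preserves $F$.

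The radical $F\cap F^\bot$ is a $G$-invariant totally isotropic subspace of $\mathcal{H}$, hence trivial by hypothesis; so $F$ is non-degenerate, and by quadratic irreducibility $F=E_i$. Thus $E_i$ is finite-dimensional, completing the reduction. The main obstacle I expect is this third step: passing from finite-dimensionality of the Lie triple system $\mathfrak{p}_0$ at a single point to the finite-dimensional linear support of the whole $G$-orbit in $\mathcal{H}$. This requires carefully unwinding the identification $T_{V_0}\calX_\K(p_i,q_i)\simeq\mathrm{Hom}_\K(V_0,V_0^\bot)$ and verifying that the standard Cartan-decomposition bracket computations from finite dimension continue to hold in this setting, where $V_0^\bot$ is infinite-dimensional but $V_0$, $W$, and $\mathfrak{p}_0$ are not.
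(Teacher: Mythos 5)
Your proposal is correct in its overall architecture but follows a genuinely different route from the paper. The paper does not invoke the classical decomposition from \cite{MR0201569,Sasvari1990} at all: it applies Proposition~\ref{toteq} once, to the full representation, then cites \cite[Lemma 3.11]{duchesne2018boundary} to get a unique minimal finite-dimensional non-degenerate subspace $E_0\leq\mathcal{H}$ supporting the equivariant copy of $\mathcal{X}_G$ (unique, hence $G$-invariant), and finally peels off a minimal $G$-invariant (automatically non-degenerate) subspace of $E_0$ and inducts on the index of the orthogonal complement. You instead take the decomposition into finitely many irreducibles plus a unitary part as the starting point, so that only finite-dimensionality of each indefinite irreducible block remains to be proved; your Lie-triple-system computation at the base point $V_0$ is essentially a hands-on proof of the support lemma that the paper outsources. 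Your route buys self-containedness of the ``finite support'' step and a cleaner endgame (quadratic irreducibility gives $F=E_i$ at once, no induction); the paper's route buys independence from the external decomposition theorem and avoids having to justify continuity of the restricted representations $\rho_i$ in the pointwise-convergence topology of $\PO_\K(p_i,q_i)$, a point you should at least address before invoking Proposition~\ref{toteq} for $\rho_i$.

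One step is misstated, although the conclusion you draw from it is true and easily repaired. It is not true in general that $\rho_i(G)$ is contained in the connected subgroup with Lie algebra $\mathfrak{g}_{\mathcal{Y}}=\mathfrak{p}_0\oplus[\mathfrak{p}_0,\mathfrak{p}_0]$: that subgroup acts trivially on $F^\bot$, whereas before you know $F=E_i$ nothing prevents $\rho_i$ from acting non-trivially on $F^\bot$ (elements of the pointwise stabilizer of $\mathcal{Y}$ need not lie in $\exp(\mathfrak{g}_{\mathcal{Y}})$). The $G$-invariance of $F$ should be argued directly: your block computation shows that every point of $\mathcal{Y}$, viewed as a positive $p_i$-dimensional subspace of $E_i$, is contained in $F=V_0\oplus W$, and in fact $F$ is exactly the linear span of $\bigcup_{V\in\mathcal{Y}}V$; since $\rho_i(G)$ permutes the points of $\mathcal{Y}$, it preserves this span. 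With that substitution the argument goes through.
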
 
 
 \begin{proof}If there is a fixed point in $\calX_\K(p,\kappa)$, then there is a definite positive subspace $E$ of dimension $p$ which is $G$-invariant. Let $\mathcal{K}$ be the orthogonal of $E$. One has $\mathcal{H}=E\oplus\mathcal{K}$ and $\mathcal{K}$ is negative definite. So the induced representation on $\mathcal{K}$ is unitary. This implies that the representation on $\mathcal{H}$ is unitary for the scalar product $\langle\ ,\ \rangle|_E-\langle\ ,\ \rangle|_{\mathcal{K}}$.
 
 Assume there is no fixed point in $\calX_\K(p,\kappa)$. Since a fixed point at infinity would yield an invariant totally isotropic flag, we know there is no fixed points at infinity.  So, by Proposition \ref{toteq}, there is a $G$-equivariant totally geodesic embedding of $\mathcal{X}_G$ in $\calX_\K(p,\kappa)$ and  we denote by $\calX_G$ its image as well. By \cite[Lemma 3.11]{duchesne2018boundary}, there is a unique minimal finite dimensional non-degenerate subspace $E_0\leq\mathcal{H}$  such that for any $x\in\mathcal{X}_G$, $x\leq E_0$. By uniqueness, this linear subspace $E_0$ is $G$-invariant. If $E\leq E_0$ is $G$-invariant then its kernel would be $G$-invariant and there would be a totally isotropic invariant space. So, any $G$-invariant subspace $E\leq E_0$ is non-degenerate and in particular, $\mathcal{H}$ splits orthogonally as $E\oplus E^\bot$. Let us choose such a minimal $E$. In particular the induced representation on $E$ is irreducible. The signature of the restriction of the quadratic form on $E^\bot$ is $(s,\kappa)$ for some $s<r$. While $s>0$, one can repeat the argument and an induction on the index of the quadratic form gives the result.
 \end{proof}

\section{Representations of $\PO_\K(p,\kappa)$ for $p\geq2$.}

The goal of this section is to prove Theorem~\ref{quad} that describes geometrically dense continuous representations $\PO_\K(p,\infty)\to\PO_{\mathbf{L}}(q,\infty)$ in the generality of infinite cardinals $\kappa$ and $\lambda$.
   
   \begin{thm}\label{quadk} Let $p,q\in\NN$ with $p\geq2$ and $\K,\mathbf{L}\in\{\R,\C,\H\}$, If $\rho\colon \PO_\K(p,\kappa)\to\PO_{\mathbf{L}}(q,\lambda)$ is a continuous geometrically dense representation then $\K=\mathbf{L}$, $q=p$, $\kappa=\lambda$ and $\rho$ is induced by an isomorphism of quadratic spaces.  \end{thm}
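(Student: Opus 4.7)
The plan is to exhaust $\PO_\K(p,\kappa)$ by finite-dimensional simple Lie subgroups where Theorem~\ref{splittingk} directly applies, then stitch the resulting local pictures. Concretely, I would fix a nested sequence $F_1\subset F_2\subset\cdots$ of non-degenerate $\K$-subspaces of the source Hilbert space $\mathcal{H}$ with $F_n$ of signature $(p,n)$ and $\bigcup_n F_n$ dense in $\mathcal{H}$. Setting $G_n:=\PSO_\K(F_n)$, each $G_n$ is a connected simple non-compact Lie group of real rank $p\geq 2$, and by Proposition~\ref{OE} combined with Proposition~\ref{dense} the union $\bigcup_n G_n$ is dense in (the identity or orthochronous component of) $\PO_\K(p,\kappa)$. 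Continuity of $\rho$ reduces everything to understanding the restrictions $\rho|_{G_n}$.

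To invoke Theorem~\ref{splittingk} for $\rho|_{G_n}$ I first need to rule out a totally isotropic invariant subspace in the target, equivalently a fixed simplex in $\partial\calX_{\mathbf{L}}(q,\lambda)$. The fixed sets $Y_n:=\Fix_{\partial\calX_{\mathbf{L}}(q,\lambda)}(\rho(G_n))$ form a decreasing family of convex sub-complexes whose intersection is fixed by the closure $\rho\bigl(\overline{\bigcup_n G_n}\bigr)=\rho(\PO_\K(p,\kappa))$ and hence empty by geometric density; combining with Corollary~\ref{corcc} and finite-dimensionality of the building should force $Y_n=\varnothing$ for $n$ large enough. Theorem~\ref{splittingk} then provides, for each such $n$, an orthogonal decomposition $\mathcal{H}'=\bigoplus_i V_i^{(n)}\oplus\mathcal{K}^{(n)}$ of the target Hilbert space into finite-dimensional $G_n$-irreducible non-degenerate summands and a unitary tail.

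The core of the argument is identifying these pieces. Since $G_n\cong\PSO_\K(p,n)$ is a classical group and the target carries a form of bounded index $q$, the possible finite-dimensional $G_n$-irreducible representations preserving a Hermitian form of index at most $q$ are severely constrained; as $n$ grows, only one non-compact isotypical class can survive, namely the tautological representation, possibly precomposed with some $\sigma\in\Aut_c(\K)$, which simultaneously forces $\K=\mathbf{L}$. Uniqueness of the standard piece for each $G_n$ makes them coherent along the chain $G_n\subset G_{n+1}$, so passing to the limit one obtains a single $\sigma$-semilinear isometric embedding $\iota\colon\mathcal{H}\hookrightarrow\mathcal{H}'$ of $\K$-quadratic spaces. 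The remaining summands (compact isotypical pieces together with the unitary tail) assemble into a closed $\rho$-invariant subspace of $\mathcal{H}'$; by geometric density of $\rho(\PO_\K(p,\kappa))$ this complement must vanish, so $\iota$ is surjective, forcing $p=q$ and $\kappa=\lambda$, and $\rho$ is induced by $\iota$.

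The hard part will be the third paragraph: the finite-dimensional classification of $\PSO_\K(p,n)$-representations of bounded signature index into $\PO_{\mathbf{L}}(q,\lambda)$, and the compatibility of standard pieces across $G_n\subset G_{n+1}$ (to avoid the standard piece ``jumping'' between non-isomorphic summands as $n$ grows). A secondary technical issue is the real case, where the density step needs the orthochronous subgroups $\PSO_\R^o(F_n)$ and the two connected components of $\PO_\R(p,\kappa)$ must be handled separately before recombining.
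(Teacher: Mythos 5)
Your setup (exhausting $\PO_\K(p,\kappa)$ by the finite-dimensional simple subgroups $\PSO_\K(F)$ and using continuity plus density of $\PSO^{\mathfrak{fr}}_\K(p,\kappa)$) is exactly the paper's starting point, but there are two genuine gaps at the decisive steps. First, the claim that the decreasing family $Y_n=\Fix_{\partial\calX_{\mathbf{L}}(q,\lambda)}(\rho(G_n))$ must eventually be empty does not follow from its intersection being empty: the Tits boundary of $\calX_{\mathbf{L}}(q,\lambda)$ is not compact, so a nested family of non-empty closed convex subsets can have empty intersection. The intersection lemma \cite[Lemma 5.1]{MR2558883} only applies to filtering families of intrinsic radius at most $\pi/2$, and Corollary~\ref{corcc} alone does not give that bound (the alternative ``$Y_n$ is a sub-building'' is not a contradiction here, since $G_n$ itself is not geometrically dense). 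The paper has to work for this: it analyses the span $H_E$ of all $\rho(G_E)$-invariant lines in $\mathcal{H}'$ (using that low-dimensional representations of $G_E$ are trivial once $\dim G_E$ is large), splits into the cases where $H_E$ is degenerate or not, and in the non-degenerate case runs an index count on the increasing family $H_E^\bot$ to reach a contradiction with geometric density. Some argument of this kind is needed and is absent from your sketch.

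Second, and more seriously, your third paragraph --- the classification of finite-dimensional $\PSO_\K(p,n)$-irreducibles carrying an invariant Hermitian form of index at most $q$, uniformly in $n$, together with the coherence of the ``standard'' summands along $G_n\subset G_{n+1}$ --- is the entire content of the theorem in your approach, and you leave it as an assertion. It is not obviously false, but proving it (via highest weights and signature computations, for all three $\K$, including twists by $\Aut_c(\K)$ and possible multiplicities) would be a substantial independent undertaking. The paper avoids this linear-algebraic route entirely: it applies Proposition~\ref{toteq} to get a $G_E$-equivariant \emph{totally geodesic embedding} of the symmetric space $\mathcal{X}_{G_E}$ into $\calX_{\mathbf{L}}(q,\lambda)$; uniqueness of the minimal invariant totally geodesic subspace (Lemma~\ref{lem:min}) makes these embeddings automatically coherent as $E$ grows, so their closed union is a copy of $\calX_\K(p,\kappa)$ which must be all of $\calX_{\mathbf{L}}(q,\lambda)$ by geometric density; equality of ranks gives $p=q$, and the induced isomorphism of polar spaces at infinity is converted into a semilinear isomorphism of quadratic spaces by Tits' theorem \cite[Theorem 8.6.II]{MR0470099}, which is what yields $\K=\mathbf{L}$ and $\kappa=\lambda$. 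If you want to salvage your route, you must either supply the representation-theoretic classification or replace it by this geometric argument. (A minor further point: using a countable chain $F_1\subset F_2\subset\cdots$ only covers $\kappa=\aleph_0$; for general $\kappa$ one must work with the filtering family of all finite-dimensional non-degenerate subspaces of index $p$, as the paper does.)
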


  \begin{proof} Let $\rho\colon\PO_\K(p,\kappa)\to\PO_{\mathbf{L}}(q,\lambda)$ be a geometrically dense representation. We denote the respective underlying Hilbert spaces $\mathcal{H}$ and $\mathcal{H}'$ with quadratic forms $Q$ and $Q'$. Let $\mathcal{E}$ be the collection of non-degenerate finite dimensional subspaces of $\mathcal{H}$ of index $p$. 		Let $E\in\mathcal{E}$ and let us denote $G_E=\PSO_\K(E)$ or $\PSO^o_\R(E)$ in the real case. First, we prove that for $E$ of dimension large enough, $G_E$ has no fixed points in $\calX_{\mathbf{L}}(p,\kappa)$. For the sake of a contradiction, let us assume that for any $E\in \mathcal{E}$, $G_E$ has a non-empty set of fixed points $\mathcal{Y}_E\subset\calX_{\mathbf{L}}(q,\lambda)$. For $E\leq E'$, $\mathcal{Y}_{E'}\subseteq\mathcal{Y}_E$ and thus this is a filtering family of totally geodesic subspaces. If $\cap_{E\in\mathcal{E}}\mathcal{Y}_E$ is non-empty them $\PSO_\K^\mathfrak{fr}(p,\kappa)$ has fixed points but since $\PSO_\K^\mathfrak{fr}(p,\kappa)$ is geometrically dense by Proposition~\ref{normal}, we have a contradiction. If the intersection $\cap_{E\in\mathcal{E}}\mathcal{Y}_E$ is empty, then by Theorem~\ref{cl} and Proposition~\ref{cl2}, there is a $\PSO_\K^\mathfrak{fr}(p,\kappa)$-fixed point at infinity contradicting its geometric density.
  As a conlusion, we know there is some $E\in\mathcal{E}$ such that $\mathcal{Y}_E=\emptyset$ and the same holds for any $E'\geq E$.

   Now, let us show that $G_E$ has no fixed points in $\partial\calX_{\mathbf{L}}(q,\lambda)$. Assume for the sake of a contradiction that there are fixed points at infinity. Let us recall that one can associate a flag of totally isotropic subspaces of $\mathcal{H}'$ to any point at infinity. If $G_E$ stabilizes some totally isotropic subspace $F$ then one get a continuous representation $G_E\to\PGL(F)$. Since $G_E$ is simple and $F$ has dimension at most $q$, we know that as soon $\dim(G_E)>\dim(\PGL(F))$, this representation is trivial (because a continuous representation of a (finite dimensional) Lie group to another Lie group is automatically smooth) and thus $G_E$ fixes all lines in $F$. In particular, the fixed point set $(\PP\mathcal{H}’)^{G_E}$ of $G_E$ in the projective space $\PP\mathcal{H}’$ is non-empty. If $L_1, L_2$ are two $G_E$-invariant lines  in $\mathcal{H}’$, by the same argument, $G_E$ fixes all lines of their span. In particular, there is a closed linear subspace $H_E$ of $\mathcal{H}’$ such that $(\PP\mathcal{H}’)^{G_E}$ is the projective space of $H_E$. Then, considering the restriction of $Q'$ on $H_E$,  one has two possibilities :
     \begin{enumerate}[label=(\emph{\roman*})]
    \item $H_E$ has a non-trivial kernel or
    \item $H_E$ is non-degenerate.
    \end{enumerate}
In the first case, this means that there is some line which is orthogonal to all $G_E$-invariant invariant totally isotropic subspaces. In particular, all $G_E$-fixed points in $\partial\calX_{\mathbf{L}}(q,\lambda)$ lie in the ball of radius $\pi/2$ for the Tits metric around the vertex corresponding to such a line in the kernel.
    
    In the second case, one has $\mathcal{H}’=H_E\oplus H_E^\bot$. Observe that if $E’\geq E$ then $H_{E’}\leq H_E$ and thus $H_{E’}^\bot\geq H_E^\bot$.

Assume that the first possibility (i) holds for all $E\in\mathcal{E}$ then let us denote by $C_E$ the set of $G_E$-fixed points in $\partial\calX_{\mathbf{L}}(q,\lambda)$. This is a filtering family of closed convex subsets of  intrinsic at most $\pi/2$ and by \cite[Lemma 5.1]{MR2558883}, the intersection of all of them is not empty and thus one get a $\PSO_\K^\mathfrak{fr}(p,\kappa)$-fixed point in $\partial\calX_{\mathbf{L}}(q,\lambda)$ and together with Proposition~\ref{normal}, we get a contradiction with geometric density of the representation.

So we know that there is $E\in\mathcal{E}$ such that $H_E$ has trivial kernel and thus for any $E’\geq E$, the same holds. The closure of the union $\cup_{E\in\mathcal{E}}H_E^\bot$ is $\PSO_\K^\mathfrak{fr}(p,\kappa)$-invariant. Since geometric density implies irreducibility of the representation, this union is necessarily dense and thus the index of the restriction of the quadratic form on this union is $q$. In particular, one can find $E$ such that $H_E^\bot$ has index $q$ (since the index can be checked using finitely many points at a time). This yields a contradiction with the fact that $H_E$ has some isotropic line and thus index at least 1, which implies that the one of $H_E^\bot$ is at most $q-1$.

In conclusion, we know that $G_E$ has no fixed point in  $\calX_{\mathbf{L}}(q,\lambda)$ nor in $\partial\calX_{\mathbf{L}}(q,\lambda)$ for some $E\in\mathcal{E}$ and thus for all $E’\geq E$.
    For such $E\in \mathcal{E}$, there is a $G_E$-equivariant totally geodesic embedding $\varphi_E$ of the symmetric space $\mathcal{X}_E$ of $G_E$ in $\PO_{\mathbf{L}}(q,\lambda)$ by Proposition~\ref{toteq} since $p\geq2$. The image $\mathcal{Y}_E$ of the embedding $\varphi_E$ is a $G_E$-invariant totally geodesic subspace. It is minimal since the action on it is transitive. Since  $G_E$ has no fixed point at infinity, this totally geodesic subspace $\mathcal{Y}_E$ is unique (Lemma~\ref{lem:min}). In particular, this uniqueness implies that  if $E\leq E’$, then $\mathcal{Y}_E\subset \mathcal{Y}_{E'}$ and more precisely $\varphi_E$ is the restriction of $\varphi_{E'}$ on $\mathcal{X}_E$.
  
  The closure $\mathcal{Y}$ of the union of the subspaces $\mathcal{Y}_E$ (which is isometric to the space $\calX_\K(p,\kappa)$) is a $\PSO^{\mathfrak{fr}}_\K(p,\kappa)$-invariant totally geodesic subspace of $\calX_{\mathbf{L}}(q,\lambda)$. By Lemma~\ref{normal} and the fact that $\rho\left(\PO_\K(p,\kappa)\right)$ is geometrically dense,  $\mathcal{Y}$ is necessarily $\mathcal{X}_{\mathbf{L}}(q,\lambda)$. In particular, the two spaces $\calX_\K(p,\kappa)$ and  $\calX_{\mathbf{L}}(q,\lambda)$ have the same ranks, i.e. $p=q$. The induced isometry between the Tits buildings at infinity yields an isomorphism of polar spaces as in the proof of Theorem~\ref{isomgroup} and once again, relying on   \cite[Theorem 8.6.II]{MR0470099}, this isomorphism is given by some isomorphism of quadratic spaces,  this corresponds to a semilinear map and in particular $\K=\mathbf{L}$ and $\kappa=\lambda$.
 \end{proof} 
     
\bibliographystyle{../../Latex/Biblio/halpha}
\bibliography{../../Latex/Biblio/biblio}

\end{document}